\documentclass[12pt,a4paper,twoside, english]{amsart}
\usepackage[a4paper, left=32mm, right=32mm, top=36mm, bottom=36mm]{geometry}
\usepackage{amsmath,amsfonts,amssymb}
\usepackage{float}
\usepackage{babel}
\usepackage{hyperref}

\usepackage{wrapfig}
\usepackage{booktabs}

\usepackage{stmaryrd}
\usepackage{comment}
\usepackage{caption}
\usepackage{sidecap}
\usepackage{xcolor}

\hypersetup {colorlinks=true, linkcolor=blue, urlcolor=blue, citecolor=blue}

\newcommand\R{\mathbb{R}}
\newcommand\C{\mathbb{C}}

\newcommand\eps{\varepsilon}
\DeclareMathOperator{\curl}{\mathrm{curl}}

\newcommand{\zvec}[2]{\left(\begin{matrix}#1\cr #2\end{matrix}\right)}

\newcommand\cT{\mathcal{T}}

\renewcommand{\Re}{\operatorname{Re}}
\renewcommand{\Im}{\operatorname{Im}}

\newcommand{\del}{\partial}
\newcommand{\ra}{\rangle}
\newcommand{\la}{\langle}

\newcommand{\id}{\mathrm{id}}

\newcommand{\loc}{\mathrm{loc}}

\newcommand\fhi{\varphi}
\newcommand{\weakto}{\rightharpoonup}

\newcommand{\nn}[1]{\nu \times #1|_{\Gamma}}
\newcommand{\nnn}[1]{((#1 \times \nu)|_{\Gamma} \times \nu)}
\newcommand{\cptemb}{\xhookrightarrow{\text{cpt. }}}

\usepackage{mathtools}
\usepackage{extarrows}
\renewcommand{\div}{\operatorname{div}}

\newtheorem{theorem}{Theorem}[section]
 
\newtheorem{lemma}[theorem]{Lemma}
\newtheorem{corollary}[theorem]{Corollar}

\newtheorem{remark}[theorem]{Remark}

\newtheorem{assumption}[theorem]{Assumption}

\def\XXint#1#2#3{{\setbox0=\hbox{$#1{#2#3}{\int}$ }
		\vcenter{\hbox{$#2#3$ }}\kern-.6\wd0}}

\numberwithin{equation}{section}

\allowdisplaybreaks

\begin{document}
\bibliographystyle{abbrv}
	
\pagestyle{myheadings} \markboth{Maxwell's equations with mixed
  impedance boundary conditions}{B.~Schweizer and D.~Wiedemann}
	
\thispagestyle{empty}
\begin{center}
  ~\vskip3mm {\Large\bf Maxwell's equations with mixed\\[2mm]
    impedance
    boundary  conditions}\\[7mm]
  {\large B.~Schweizer\footnotemark[1] and
    D.~Wiedemann\footnotemark[1]}\\[4mm]
  
  October 16, 2025 \\[2mm]
  
\end{center}

\footnotetext[1]{Technische Universität Dortmund, Fakult\"at f\"ur
  Mathematik, Vogelspothsweg 87, D-44227 Dortmund,
  ben.schweizer$@$tu-dortmund.de, david.wiedemann$@$tu-dortmund.de}
	
\begin{center}
  \vskip4mm
  \begin{minipage}[c]{0.87\textwidth}
    {\bf Abstract:} We study the time-harmonic Maxwell equations on
    bounded Lipschitz domains with an impedance boundary
    condition. The impedance coefficient can be matrix valued such
    that, in particular, a polarization dependent impedance is
    modeled. We derive a Fredholm alternative for this system. As a
    consequence, we obtain the existence of weak solutions for
    arbitrary sources when the frequency is not a resonance
    frequency. Our analysis covers the case of singular impedance
    coefficients.

    \vskip3mm {\bf Keywords:} Maxwell's equations, Impedance boundary
    condition, Polarization

    \vskip3mm
    {\bf MSC:} 35Q61, 78A25, 35A01
    
    % 35Q61: Maxwell equations
    % 78A25: Electromagnetic theory (general)
    % 35A01: Existence problems for PDEs: global existence, local existence, non-existence 
    % 78M35: Asymptotic analysis in optics and electromagnetic theory
    % 78M40: Homogenization in optics and electromagnetic theory
    % 35B40: Asymptotic behavior of solutions to PDEs

  \end{minipage}\\[6mm]
\end{center}

\section{Introduction}

We study the time-harmonic Maxwell equations in a bounded Lipschitz
domain. Our interest is to investigate an impedance boundary
condition, a condition that can be compared with a Robin boundary
condition in a scalar problem. The impedance coefficient $\Lambda$ can
be matrix valued and can, therefore, model a polarization dependent
impedance. Furthermore, the matrix coefficient may be singular in the
sense that it is non-trivial, but it vanishes on a non-trivial
subspace of the tangent space. Our result is a Fredholm alternative
for this Maxwell system.

Let us describe the system in mathematical terms. Given is a bounded
Lipschitz domain $\Omega\subset \R^3$, two coefficient functions
$\eps \in L^\infty(\Omega, \C^{3 \times 3})$ and
$\mu \in L^\infty(\Omega, \C^{3 \times 3})$, a frequency $\omega>0$
and right-hand sides $f_h, f_e \colon \Omega\to \C^3$ of class
$L^2$. We seek for functions $E, H \colon \Omega\to \C^3$ that
satisfy, in $\Omega$,
\begin{subequations}\label{eq:Maxwell-strong-E-H}
  \begin{eqnarray} \label{eq:Maxwell-strong-E-H:1}
    \curl E &= i \omega \mu H + f_h \,,\\
    \label{eq:Maxwell-strong-E-H:2}
    \curl H &= - i \omega \varepsilon E +f_e  \,.
  \end{eqnarray}
  The system is complemented with the tangential boundary condition
  \begin{equation}
    \label{eq:Maxwell-strong-E-H:3}
    E \times \nu = \Lambda ((H \times \nu) \times \nu)
    \qquad\textrm{ on } \Gamma \coloneqq \partial\Omega \,,
  \end{equation}
\end{subequations}
where $\nu$ is the exterior normal vector on $\del\Omega$ and
$\Lambda$ is a matrix valued impedance coefficient. The normal vector
is a map $\nu\colon \del\Omega = \Gamma\ni x\mapsto \nu(x) \in \R^3$,
defined for almost every $x$ (almost every $x$ in the sense of the
two-dimensional measure on $\Gamma$). Similarly, $\Lambda$ is a map
$\Lambda\colon \Gamma\ni x\mapsto \Lambda(x) \in \C^{3\times 3}$.  The
map $H(x) \mapsto -(H(x) \times \nu(x)) \times \nu(x)$ is the
projection onto the tangential space $T_x\Gamma$.

We note that the above setting covers perfect conductor boundary
conditions on some part of the boundary (setting $\Lambda = 0$ on this
part) and impedance boundary conditions in the remaining part of the
boundary. In view of such applications, it is important that we do not
impose a continuity property on the coefficient $\Lambda$.  The
following situation is also covered: Along the boundary (or a certain
part of the boundary), there is a perfect reflection condition for
some polarization direction and an impedance condition for the
orthogonal polarization direction. This is modeled with a singular map
$\Lambda\neq 0$.

The solution space and the weak solution concept are defined below in
\eqref{eq:HGL2-curl-M} and \eqref{eq:WeakForm:Maxwell-in-E}. The weak
form encodes the boundary condition with matrix valued functions $\Sigma$ and
$\Theta$ instead of $\Lambda$. We will discuss that the two
formulations are equivalent, see Section \ref{sec:BoundaryCond} and
\eqref{eq:BC-in-E}.  Our main theorem is formulated with assumptions
on $\Sigma$ and $\Theta$, but we provide also a formulation of the
assumptions in terms of $\Lambda$, see Lemma
\ref{lem:AssumptionsLambda}.

\begin{assumption}[Assumptions on the
  coefficients]\label{ass:Coefficents}
  The bulk coefficients are maps
  $\eps, \mu \in L^\infty(\Omega, \C^{3 \times 3})$. They are coercive
  in the sense that, for some constant $c_0 > 0$, for almost every
  $x\in \Omega$, there holds
  \begin{equation}\label{eq:Coercivity:eps-mu}
    \bar\zeta \cdot \eps(x)\zeta \geq c_0 \|\zeta\|^2\quad
    \text{ and }\quad
    \bar\zeta \cdot \mu(x)\zeta \geq c_0 \|\zeta\|^2
    \qquad \text{for all } \zeta \in \C^3\,.
  \end{equation}
  The boundary coefficients are given by maps
  $\Theta, \Sigma \in L^\infty(\Gamma, \C^{3 \times 3})$. Their sum is
  coercive: There exists a constant $c_0 > 0$ such that, for almost
  every $x\in \Gamma$,
  \begin{eqnarray}\label{eq:Coerc:Sigma+Theta}
    \bar\zeta \cdot (\Sigma(x) + \Theta(x)) \zeta \geq c_0 \|\zeta\|^2
    \qquad \text{ for all } \zeta \in \C^3\,.
  \end{eqnarray}
\end{assumption}

With the coercivity requirement in \eqref{eq:Coercivity:eps-mu} and
\eqref{eq:Coerc:Sigma+Theta} we demand, in particular, that the
left-hand side is real for all arguments $x$ and $\zeta$. Every real
valued, symmetric and coercive matrix is also coercive in the above
sense.  With $\mu\in L^\infty$ coercive in the above sense, also the
inverse matrix $\mu^{-1}$ is coercive in the above sense. We choose
$c_0>0$ such that $\eps$, $\eps^{-1}$, $\mu$, $\mu^{-1}$ are coercive
with this constant.  Regarding \eqref{eq:Coerc:Sigma+Theta}, we
mention that it would be sufficient to consider only
$\zeta \in T_x\Gamma$, where we understand $T_x\Gamma$ as complex
vector space.

We note that not only real matrices are coercive in the above sense. A
two-dimensional example is given by the matrix {\small
  $\zvec{2 &-i}{i &2} \in \C^{2\times 2}$}.

\smallskip
We should clarify how $\Sigma$ and $\Theta$ can be chosen for a given
map $\Lambda$. This helps to derive conditions on $\Lambda$ such that
$\Sigma$ and $\Theta$ satisfy the above conditions.

\begin{lemma}[Assumptions in terms of
  $\Lambda$]\label{lem:AssumptionsLambda}
  Let a boundary condition be given by a map
  $\Lambda \in L^\infty(\Gamma, \C^{3 \times 3})$. For almost every
  $x\in \Gamma$, with the kernel $Z = \ker(\Lambda(x))\subset \C^3$
  and its orthogonal complement $Z^\perp\subset \C^3$, we assume: (i)
  $\nu(x) \in Z$ such that $Z^\perp\subset T_x\Gamma$, (ii) $\Lambda$
  maps into the orthogonal complement of its kernel,
  $\operatorname{R}(\Lambda(x)) \subset Z^\perp$, (iii) $\Lambda(x)$
  is coercive on $Z^\perp$: For a constant $c_0 >0$ holds, for almost
  every $x\in \Gamma$ and for the space $Z$ corresponding to $x$:
  \begin{eqnarray}\label{eq:Coerc:Sigma+M}
    \bar\zeta \cdot \Lambda(x) \zeta
    \geq c_0 \|\zeta\|^2 \qquad
    \text{for all } \zeta \in Z^\perp\,.
  \end{eqnarray}

  In this situation, we set $\Theta(x) \coloneqq \Pi_Z$, the
  orthogonal projection onto $Z$. We define
  $\Sigma(x) \in L^\infty(\Omega, \C^{3 \times 3})$ by demanding that
  it is the inverse of $\Lambda(x)|_{Z^\perp} \colon Z^\perp\to Z^\perp$ on
  $Z^\perp$ and that $\Sigma(x)|_Z = 0$.  Then, $\Sigma$ and $\Theta$
  have the properties of Assumption \ref{ass:Coefficents}. The weak
  solution concept with $\Sigma$ and $\Theta$ encodes the strong
  formulation of \eqref{eq:Maxwell-strong-E-H:3}.
\end{lemma}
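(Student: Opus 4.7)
My plan is to address the three assertions of the lemma in sequence: well-definedness of $\Sigma,\Theta$ in $L^\infty(\Gamma,\C^{3\times 3})$, the coercivity inequality \eqref{eq:Coerc:Sigma+Theta}, and the equivalence between the weak BC formulated with $\Sigma,\Theta$ and the strong condition \eqref{eq:Maxwell-strong-E-H:3}. For well-definedness, the projection $\Theta(x) = \Pi_{Z(x)}$ has operator norm $\le 1$, and its measurability follows by stratifying $\Gamma$ according to $\operatorname{rank}\Lambda(x)$ and using a pointwise formula such as $\Pi_{\ker\Lambda} = \id - \Lambda^+\Lambda$, which depends continuously on $\Lambda$ on each stratum. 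By assumption (ii), $\Lambda(x)$ restricts to an endomorphism of $Z^\perp$; coercivity (iii) makes it injective, hence bijective in finite dimension, so $\Sigma(x)$ is defined, and the same coercivity yields $\|\Sigma(x)\|\le c_0^{-1}$.

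For the coercivity of $\Sigma+\Theta$, I would decompose $\zeta=\zeta_\parallel+\zeta_\perp$ orthogonally with $\zeta_\parallel\in Z$ and $\zeta_\perp\in Z^\perp$. Since $\Theta=\Pi_Z$ and $\Sigma$ vanishes on $Z$ while sending $Z^\perp$ into $Z^\perp$, one obtains $\bar\zeta\cdot\Theta\zeta=\|\zeta_\parallel\|^2$ and $\bar\zeta\cdot\Sigma\zeta=\overline{\zeta_\perp}\cdot\Sigma\zeta_\perp$. Writing $\eta:=\Sigma\zeta_\perp$ so that $\zeta_\perp=\Lambda\eta$, the latter term equals $\overline{\Lambda\eta}\cdot\eta=\bar\eta\cdot\Lambda\eta$ (the expression is real by \eqref{eq:Coerc:Sigma+M}), which is $\ge c_0\|\eta\|^2\ge c_0\|\Lambda\|_{L^\infty}^{-2}\|\zeta_\perp\|^2$. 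Summing gives \eqref{eq:Coerc:Sigma+Theta} with $c:=\min\{1,\,c_0\|\Lambda\|_{L^\infty}^{-2}\}$.

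For the equivalence of the boundary conditions, the key algebraic identities are $\Sigma\Lambda=\Lambda\Sigma=\id-\Theta$ (both sides equal the orthogonal projection onto $Z^\perp$) and $\Sigma\Theta=\Theta\Sigma=0$, all immediate from the construction. The strong condition \eqref{eq:Maxwell-strong-E-H:3} entails two separate relations: its right-hand side lies in $Z^\perp$ by (ii), so $\Theta(E\times\nu)=0$; and applying $\Sigma$ to \eqref{eq:Maxwell-strong-E-H:3} and using $\Sigma\Lambda=\id-\Theta$ gives $\Sigma(E\times\nu)=(\id-\Theta)((H\times\nu)\times\nu)$. My plan is to show that these two pointwise relations are exactly what the weak formulation \eqref{eq:BC-in-E} enforces on sufficiently regular boundary traces; the converse direction is then obtained by applying $\Lambda$ to the $\Sigma$-identity and using $\Lambda\Sigma=\id-\Theta$ together with the range condition from (ii) to recover \eqref{eq:Maxwell-strong-E-H:3}.

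The main obstacle I foresee lies in this last step: matching the algebraic decomposition $\C^3=Z\oplus Z^\perp$ with the tangential splitting that arises naturally from the traces $E\times\nu$ and $\phi\times\nu$ in the weak formulation. In the genuinely polarization-singular case, where $Z\cap T_x\Gamma\neq\{0\}$, the $Z$-component of $E\times\nu$ is tangential but carries no physical content from $\Lambda$, so one must verify that \eqref{eq:BC-in-E} pins it down correctly via $\Theta$ alone. The construction and coercivity steps are routine once the pointwise linear algebra is settled.
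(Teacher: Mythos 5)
Your proposal is correct and follows essentially the same route as the paper: coercivity of $\Sigma+\Theta$ via the orthogonal decomposition $\zeta=\zeta_\parallel+\zeta_\perp$ with $\zeta_\parallel\in Z$, $\zeta_\perp\in Z^\perp$, and the equivalence of the boundary conditions via the identities $\Sigma\Lambda=\Lambda\Sigma=\id-\Theta$ (the paper's formal proof of the lemma covers only the well-definedness and coercivity; the equivalence is carried out in the surrounding discussion of Section~\ref{sec:BoundaryCond} with the same algebra). The obstacle you foresee in the last step is resolved by assumption (i), which forces $Z^\perp\subset T_x\Gamma$ so that the tangent space splits as $(Z\cap T_x\Gamma)\oplus Z^\perp$ — exactly how the paper reconciles the two decompositions.
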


The proof of Lemma \ref{lem:AssumptionsLambda} is given in Section
\ref{sec:BoundaryCond}.

\begin{remark}[Special case $\Lambda=0$]
  We emphasize that our setting allows to choose $\Lambda \equiv
  0$. This choice models a perfectly conducting boundary. For
  $\Lambda \equiv 0$, we can set $\Theta = \id$, the space
  $H_\Theta(\curl, \Omega, \Gamma)$ coincides with the classical space
  $H_0(\curl, \Omega)$ and all boundary integrals in the proofs are
  vanishing. Conceptually, our approach for general $\Lambda$ is not
  more involved than classical existence proofs for
  $\Lambda \equiv 0$.
\end{remark}

\subsection{Function spaces and weak formulation}

A fundamental function space in the analysis of Maxwell's equations is
\begin{eqnarray}\nonumber
  H(\curl,\Omega) \coloneqq \left\{u \in L^2(\Omega,\C^3) \ \middle|\ 
  \exists f \in L^2(\Omega, \C^3)\colon \phantom{\int}\right.
  \qquad\qquad
  \\
    \left. \int_\Omega f \cdot \phi = \int_\Omega u \cdot \curl \phi \
  \forall \phi \in C^\infty_c(\Omega, \C^3) \right\}\,.
  \label{eq:H-curl}
\end{eqnarray}
The function $f$ is the distributional curl of $u$ and we therefore
write $\curl u = f$.  The space $H(\curl, \Omega)$ is a Hilbert space
with
$\|u\|_{H(\curl, \Omega)}^2 \coloneqq \int_\Omega \left\{|u|^2 +
  |\curl u|^2\right\}$ and the scalar product
$\la u, \fhi\ra \coloneqq \la u, \fhi\ra_{L^2(\Omega)} + \la \curl u,
\curl \fhi\ra_{L^2(\Omega)}$.

Functions in $H(\curl, \Omega)$ have a tangential trace in the
distributional sense, but we do not need the theory of tangential
traces here. We construct a space of functions with a tangential trace
in $L^2(\Gamma)$ as follows:
\begin{eqnarray}
  \nonumber
  H(\curl, \Omega, \Gamma) \coloneqq
  \left\{u \in H(\curl,\Omega)\ \middle|\
  \exists g \in L^2(\Gamma, \C^3) \colon \phantom{\int}
  \right.
  \qquad\qquad\qquad\qquad
  \\
  \left. \int_\Omega \left\{\curl u \cdot \phi - u \cdot \curl \phi \right\}
  = \int_{\Gamma} g \cdot \phi \quad \forall \phi \in H^1(\Omega)\right\}\,.
  \label{eq:HGL2-curl}
\end{eqnarray}
The function $g$ in \eqref{eq:HGL2-curl} is the tangential trace, we
write $\nn{u} \coloneqq g$. We remark that a tangential trace function
$g$ satisfies always $g(x)\cdot \nu(x) = 0$ for a.e.~$x\in \Gamma$; this
can be seen by inserting test-functions $\phi$ that point in normal
direction along $\Gamma$.  The space is a Hilbert space with the norm
$\|u\|_{H(\curl, \Omega, \Gamma)}^2 \coloneqq \int_\Omega \left\{|u|^2
  + |\curl u|^2\right\}+ \int_{\Gamma} |\nn{u}|^2$.

For our application, we define the subspace of functions $u$ with
$\Theta (\nn{u}) =0$:
\begin{eqnarray}
  H_\Theta(\curl, \Omega, \Gamma) \coloneqq
  \left\{u \in H(\curl,\Omega, \Gamma) \mid \Theta(\nn{u})=0 \right\}\,.
  \label{eq:HGL2-curl-M}
\end{eqnarray}

\medskip {\bf Weak form of the Maxwell system.} On $\Omega$ with
boundary $\Gamma = \del\Omega$, we study the following problem: Find
$E \in H_\Theta(\curl, \Omega, \Gamma)$ such that
\begin{align}
  \begin{aligned}\label{eq:WeakForm:Maxwell-in-E}
    &\int\limits_\Omega \left\{\mu^{-1} \curl E \cdot \curl \phi
      -\omega^2 \eps E \cdot \phi\right\} - i \omega
    \int\limits_{\Gamma} \Sigma (\nn{E}) \cdot \nn{\phi}
    \\
    &\qquad = \int\limits_\Omega \left\{i \omega f_e \cdot \phi
      + \mu^{-1} f_h \cdot \curl \phi \right\} \quad \forall\ \phi \in
    H_\Theta(\curl, \Omega, \Gamma)\,,
  \end{aligned}
\end{align} 
For a solution $E\in H_\Theta(\curl, \Omega, \Gamma)$ of \eqref{eq:WeakForm:Maxwell-in-E} we use
$H\coloneqq (i \omega \mu)^{-1} \left(\curl E - f_h\right)$ as the
corresponding magnetic field.

Our main result is the following:

\begin{theorem}[Fredholm alternative]\label{thm:FredholmAlternative}
  Let $\Omega\subset\R^3$ be a bounded Lipschitz domain, let
  $\omega>0$ be a frequency and let $\eps, \mu$ and $\Sigma, \Theta$
  be coefficients that satisfy Assumption \ref{ass:Coefficents}.
  Then, the problem \eqref{eq:WeakForm:Maxwell-in-E} satisfies a
  Fredholm alternative: Either (i) for $f_h, f_e =0$ system
  \eqref{eq:WeakForm:Maxwell-in-E} has a non-trivial solution, or (ii)
  for every $f_h, f_e \in L^2(\Omega, \C^3)$, system
  \eqref{eq:WeakForm:Maxwell-in-E} has a weak solution
  $E \in H_{\Theta}(\curl,\Omega, \Gamma)$.
\end{theorem}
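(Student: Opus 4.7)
The plan is to realize the weak problem \eqref{eq:WeakForm:Maxwell-in-E} as an operator equation $(A_1 - K)E = F$ on the Hilbert space $X := H_\Theta(\curl,\Omega,\Gamma)$, with $A_1$ an isomorphism and $K$ compact, so that the Fredholm alternative for compact perturbations of isomorphisms yields the dichotomy. Since $\Theta\nn{E} = 0$ for every $E \in X$, I may freely add the vanishing boundary term $-i\omega\int_\Gamma \Theta\nn{E}\cdot\nn{\phi}$ to \eqref{eq:WeakForm:Maxwell-in-E}. Introducing
\[
  a_1(E,\phi) := \int_\Omega\bigl\{\mu^{-1}\curl E\cdot\curl\phi + \eps E\cdot\phi\bigr\} - i\omega\int_\Gamma (\Sigma+\Theta)\nn{E}\cdot\nn{\phi},
\]
the weak problem reads $a_1(E,\phi) - (1+\omega^2)\int_\Omega \eps E\cdot\phi = \ell(\phi)$ for all $\phi\in X$. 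Testing $a_1$ in sesquilinear fashion (with $\bar E$ in place of $\phi$), Assumption~\ref{ass:Coefficents} forces $\eps$ and $\mu$ to be Hermitian, hence the bulk contribution is real and bounded below by $c_0\|E\|_{H(\curl,\Omega)}^2$, while the boundary contribution equals $-i\omega$ times a nonnegative real quantity bounded below by $c_0\|\nn{E}\|_{L^2(\Gamma)}^2$. A phase rotation by $e^{i\pi/4}$ combines these into $\Re\bigl(e^{i\pi/4}a_1(E,\bar E)\bigr) \geq c\|E\|_X^2$, so by Lax--Milgram the operator $A_1 : X \to X^*$ induced by $a_1$ is an isomorphism.

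\textbf{Compactness of the mass perturbation.} The remaining piece $K(E,\phi) := (1+\omega^2)\int_\Omega \eps E\cdot\phi$ is not immediately compact as an operator $X\to X^*$, because $X$ does not embed compactly into $L^2(\Omega)$. I overcome this with the Helmholtz-type splitting $X = \nabla H^1_0(\Omega) \oplus W$, where
\[
  W := \Bigl\{u\in X \;:\; \textstyle\int_\Omega \eps u\cdot\nabla q = 0 \ \text{ for all } q\in H^1_0(\Omega)\Bigr\};
\]
the decomposition is obtained by solving the coercive scalar Dirichlet problem $\int_\Omega \eps \nabla q\cdot\nabla p = \int_\Omega \eps E\cdot\nabla p$ for all $p\in H^1_0(\Omega)$, and the inclusion $\nabla H^1_0(\Omega)\subset X$ is automatic since gradients of $H^1_0$-functions have vanishing tangential trace. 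On the gradient part, the curl and boundary contributions vanish and $a$ reduces to a coercive scalar elliptic form in $q$. On $W$, the crucial input is a Weber-type compact embedding $W \hookrightarrow L^2(\Omega)$: functions in $W$ satisfy $\div(\eps u) = 0$ distributionally and carry an $L^2$-tangential trace built into $X$, and these two ingredients together upgrade them to fields of $H^{1/2}$-regularity, so that Rellich yields compactness. Once this compact embedding is in hand, compactness of $K$ on all of $X$ follows by a routine duality argument, and $A_1 - K$ is Fredholm of index zero. The alternative between non-trivial kernel and bijectivity then closes the proof.

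\textbf{Main obstacle.} The technical heart of the argument is precisely the Weber-type compact embedding $W \hookrightarrow L^2(\Omega)$ for the mixed boundary condition $\Theta\nn u = 0$ with a possibly singular matrix-valued $\Theta\in L^\infty(\Gamma,\C^{3\times 3})$ on a mere Lipschitz domain. Classical formulations treat only perfect-conductor or perfect-insulator boundaries, and neither the coefficient $\Theta$ nor the domain possess the regularity that would permit reduction to those cases. I expect this step to be handled by localization near $\Gamma$, a partition of $\Gamma$ into pieces where the rank of $\Theta$ is effectively constant, and a careful use of the $L^2$-tangential trace as a relaxed perfect-conductor condition; once this compactness is secured, the Fredholm machinery applied to $A_1 - K$ delivers the stated alternative.
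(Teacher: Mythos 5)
Your overall architecture (coercive form via Lax--Milgram, compact mass perturbation, Fredholm alternative for $\operatorname{id}$ minus compact) is the same as the paper's, and your coercivity argument for $a_1$ via the phase rotation is fine. But there is a genuine gap in the assembly step: the claim that ``compactness of $K$ on all of $X$ follows by a routine duality argument'' is false, and no duality argument can rescue it. On the infinite-dimensional closed subspace $G=\nabla H^1_0(\Omega)\subset X$ the curl and the tangential trace vanish, so the $X$-norm restricted to $G$ is equivalent to the $L^2(\Omega)$-norm; consequently $\la K\nabla q,\nabla q\ra=(1+\omega^2)\int_\Omega\eps\,\nabla q\cdot\nabla\bar q\ge c\,\|\nabla q\|_X^2$, i.e.\ $K|_G$ is bounded below and cannot be compact. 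This is precisely why $X$ does not embed compactly into $L^2(\Omega)$, a fact you yourself note. The correct repair --- and the route the paper takes --- is not to make $K$ compact on $X$ but to reduce the \emph{problem} to the subspace $W$ (the paper's $Y_\eps$): one first replaces $f_e$ by an $\eps$-divergence-free representative via the Helmholtz decomposition of the data, observes that solutions are then automatically in $W$, checks that gradient test-functions give no information (all terms vanish), and only then applies the coercive-plus-compact splitting \emph{on $W$}, where the Maxwell compactness theorem applies. Your block computation on $G$ (where $a_1-K$ reduces to $-\omega^2\int\eps\nabla q\cdot\nabla\bar p$, an isomorphism of $G$ onto $G^*$ provided $\eps$ is Hermitian so that the form decouples) contains the seed of this fix, but as written the proof asserts a false compactness statement at the decisive moment.

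A secondary point: the ``main obstacle'' you identify --- a Weber-type compact embedding adapted to the singular, matrix-valued $\Theta$, requiring localization and a rank stratification of $\Gamma$ --- is not actually an obstacle. The compact embedding needed is
$H(\curl,\Omega,\Gamma)\cap H(\div\eps,\Omega)\cptemb L^2(\Omega,\C^3)$,
which uses only that the tangential trace is an $L^2(\Gamma)$-function (built into the definition of $H(\curl,\Omega,\Gamma)$) together with the $L^2$-control of $\div(\eps u)$; the condition $\Theta(\nn{u})=0$ plays no role in the compactness and the coefficient $\Theta$ never enters. The paper proves this by the classical potential construction (extension of $\curl u$, a vector potential, and a scalar potential solving a Dirichlet problem with $H^1(\Gamma)$ boundary data), with no stratification of $\Gamma$ required.
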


The reader might prefer the following formulation, which is an
immediate consequence of Theorem \ref{thm:FredholmAlternative}.

\begin{corollary}[Existence and uniqueness]\label{cor:existence}
  Let $\Omega\subset\R^3$ be a bounded Lipschitz domain and let
  $\eps, \mu$ and $\Sigma, \Theta$ satisfy Assumption
  \ref{ass:Coefficents}.  Let $\omega>0$ be a frequency such that
  system \eqref{eq:WeakForm:Maxwell-in-E} with $f_h = 0$ and $f_e = 0$
  has only the trivial solution.  Then, for every
  $f_h, f_e \in L^2(\Omega, \C^3)$, system
  \eqref{eq:WeakForm:Maxwell-in-E} has a unique weak solution
  $E \in H_{\Theta}(\curl,\Omega, \Gamma)$.
\end{corollary}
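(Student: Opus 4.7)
The plan is to recast \eqref{eq:WeakForm:Maxwell-in-E} as an operator equation $(I - T)E = G$ on the Hilbert space $X := H_\Theta(\curl,\Omega,\Gamma)$ with a compact operator $T \in \cL(X)$, so that the classical Fredholm alternative for $I-T$ immediately yields the dichotomy claimed in the theorem.

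For the first step, I would introduce a shifted sesquilinear form
\begin{equation*}
a_\lambda(E,\phi) := \int_\Omega \mu^{-1} \curl E \cdot \overline{\curl \phi} + \lambda \int_\Omega \eps E \cdot \bar\phi - i\omega \int_\Gamma \Sigma (\nn{E}) \cdot \overline{\nn{\phi}}
\end{equation*}
with a large real parameter $\lambda > 0$, and show that the associated operator $A_\lambda: X \to X^*$ is a linear isomorphism. Testing with $\phi = E$, the coercivity of $\mu^{-1}$ and $\eps$ in Assumption \ref{ass:Coefficents} makes the first two terms real and bounded below by $c_0(\|\curl E\|_{L^2}^2 + \lambda \|E\|_{L^2}^2)$. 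Since $\Theta(\nn E)=0$ on $X$ and $\Sigma + \Theta$ is coercive, the pointwise integrand $\overline{\nn E} \cdot \Sigma \nn E$ is real and bounded below by $c_0|\nn E|^2$; hence the boundary term is purely imaginary and contributes $\im a_\lambda(E,E) \leq -\omega c_0 \|\nn E\|_{L^2(\Gamma)}^2$. Rotating the form by $e^{i\pi/4}$ yields $\re\bigl(e^{i\pi/4} a_\lambda(E,E)\bigr) \geq c\,\|E\|_X^2$, so the complex Lax-Milgram lemma produces the invertibility of $A_\lambda$.

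Second, subtracting $(\omega^2+\lambda)\int_\Omega \eps E \cdot \bar\phi$ from $a_\lambda$ recovers the sesquilinear form of \eqref{eq:WeakForm:Maxwell-in-E}, so the weak problem reads $A_\lambda E - J_\lambda E = F$, where $J_\lambda E \in X^*$ is the functional $\phi \mapsto (\omega^2+\lambda)\int_\Omega \eps E \cdot \bar\phi$ and $F$ is the source functional. Equivalently, $(I - A_\lambda^{-1} J_\lambda) E = A_\lambda^{-1} F$ in $X$. Because $J_\lambda$ factors as $X \hookrightarrow L^2(\Omega,\C^3) \to L^2(\Omega,\C^3) \hookrightarrow X^*$, the operator $T := A_\lambda^{-1} J_\lambda \in \cL(X)$ is compact if and only if the embedding $X \hookrightarrow L^2(\Omega,\C^3)$ is compact; once this is secured, the Fredholm alternative applied to $I - T$ produces cases (i) and (ii) of the theorem.

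The main obstacle is therefore the compactness of $X \hookrightarrow L^2(\Omega,\C^3)$, since $H(\curl,\Omega)$ itself is far from being compactly embedded in $L^2$. To overcome this, I would employ an $\eps$-Helmholtz decomposition tailored to the boundary condition $\Theta(\nn\cdot)=0$: for $E \in X$, solve an auxiliary mixed elliptic problem for $p \in H^1(\Omega)$ with $\div(\eps \nabla p) = \div(\eps E)$ and a boundary condition on $p$ chosen so that $\nabla p \in X$, and write $E = \nabla p + E_0$ with $\div(\eps E_0) = 0$ and $E_0 \in X$. The gradient part lies in $H^1(\Omega)$ and is compact in $L^2$ by Rellich, while the divergence-free remainder belongs to a subspace of $X$ that is compactly embedded in $L^2$ by an adaptation of the Weber--Picard compact embedding theorem to the partial tangential boundary condition $\Theta(\nn\cdot)=0$. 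The delicate points are the correct choice of the boundary condition for $p$ ensuring $\nabla p \in X$, and the extension of the Weber--Picard argument to the mixed impedance setting with $L^2$-tangential trace; these together form the technical heart of the argument, after which the Fredholm alternative follows from the operator-theoretic framework described above.
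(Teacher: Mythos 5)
Your overall architecture --- shift the sesquilinear form to make it coercive, invert by Lax--Milgram, rewrite \eqref{eq:WeakForm:Maxwell-in-E} as $(I-T)E=A_\lambda^{-1}F$ and invoke the Fredholm alternative for a compact $T$ --- is exactly the paper's strategy, and your coercivity argument for $a_\lambda$ on $X=H_\Theta(\curl,\Omega,\Gamma)$ is sound. The genuine gap is the compactness step. The embedding $X\hookrightarrow L^2(\Omega,\C^3)$ is \emph{not} compact, and no Helmholtz decomposition of the individual functions can repair this: the gradient space $G=\{\nabla\psi:\psi\in H^1_0(\Omega)\}$ is an infinite-dimensional subspace of $X$ on which the $X$-norm coincides with the $L^2$-norm (curl and tangential trace vanish). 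Your assertion that ``the gradient part lies in $H^1(\Omega)$ and is compact in $L^2$ by Rellich'' conflates compactness of $p_j$ in $L^2$ with compactness of $\nabla p_j$ in $L^2$; for a bounded sequence $E_j\in X$ the data $\div(\eps E_j)$ is merely bounded in $H^{-1}(\Omega)$, which yields no strong $L^2$-convergence of $\nabla p_j$. Concretely, your operator $T=A_\lambda^{-1}J_\lambda$ leaves $G$ invariant (both $a_\lambda$ and $J_\lambda$ decouple $G$ from the $\eps$-divergence-free complement, since $\eps$ is Hermitian by Assumption \ref{ass:Coefficents}) and acts on $G$ as $\tfrac{\omega^2+\lambda}{\lambda}\,\mathrm{id}$, which is not compact. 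So the Fredholm framework does not close on all of $X$ as written.

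The repair is to restrict the \emph{operator equation}, not the individual functions, to the divergence-free part --- and this requires a preliminary reduction of the data that your proposal omits. The paper first replaces $f_e$ by its $\eps$-divergence-free component (Lemma \ref{lem:div-free-data}), which changes the solution only by an explicit gradient $\nabla\chi$ with $\chi\in H^1_0(\Omega)$; for such data every solution satisfies $\div(\eps E)=0$, and the variational problem is equivalent to the same identity posed on $Y_\eps=H_\Theta(\curl,\Omega,\Gamma)\cap D_\eps$ with test functions taken in $Y_\eps$ (Lemma \ref{lem:Maxwell-Y}). On $Y_\eps$ the embedding into $L^2(\Omega,\C^3)$ \emph{is} compact (Lemma \ref{lem:MaxwellCompactness}, the Weber--Picard-type result you allude to; note it only needs the $L^2(\Gamma)$-control of the full tangential trace built into $H(\curl,\Omega,\Gamma)$, not an adaptation to the condition $\Theta(\nn{E})=0$). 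With this reduction in place, your operator-theoretic argument goes through verbatim on $Y_\eps$. One could alternatively try to exploit that $I-T$ acts on $G$ as the invertible map $-\tfrac{\omega^2}{\lambda}\,\mathrm{id}$ and argue blockwise, but making that precise is exactly the reduction to $Y_\eps$ that the paper carries out.
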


The corollary is a consequence of Theorem \ref{thm:FredholmAlternative}, but it can also be obtained with a limiting
absorption principle. We provide such a proof in Section
\ref{sec:LimitingAbs}.

\subsection{Overview of the available literature}

We consider \eqref{eq:Maxwell-strong-E-H} on a bounded domain, this
setting is often denoted as the cavity problem. When the domain of
interest is unbounded, one has to impose radiation conditions at
infinity to have a well-posed problem. For recent well-posedness
result in the unbounded domain of a waveguide, we refer to \cite{Kirsch-Schweizer-ARMA-2025, HalfWG-Maxwell-LSS-2025} and references
therein. Radiation conditions for the Helmholtz equation in exterior
domains have a long history, variable coefficients have been treated
in \cite{Jaeger-1967}.

For $\Lambda=0$ the boundary condition \eqref{eq:Maxwell-strong-E-H:3}
simplifies to $\nn{E}=0$ and models a perfectly conducting material in
some exterior medium $\Omega'$ with
$\partial \Omega \subset \partial \Omega'$. If the exterior medium
$\Omega'$ is dissipative, as for instance if it is a good but not
perfect conductor, one often uses an impedance boundary condition with
$\Lambda \neq 0$. A possibility is to define $\Lambda$ as the
multiplication with a positive number, this is the usual choice for
time-harmonic fields. The impedance boundary condition can also be
used to approximate the Silver--Müller radiation condition. We refer
to \cite[Chapter 1.6.1]{ACL18} for a more detailed discussion on the
application of the boundary conditions.

In the case $\Lambda = 0$, the variational approach to the cavity
problem leads to a non-coercive sesquilinear form and an existence
result can only be formulated as a Fredholm alternative.  The Fredholm
alternative can be derived by a well-established approach (see for
instance \cite{ACL18, KH15, Mon03}), we also follow this approach in
Section \ref{sec:Fredholm}. It consists in the following three steps:
(i) A Helmholtz decomposition of the data yields a control of the
divergence of the unknown (ii) derivation of a compact embedding of
the solution space into $L^2(\Omega, \C^3)$ (iii) reformulating the
weak form of the problem in terms of operators, the compact embedding
provides the Fredholm property for one of the operators. The
derivation of the Fredholm alternative for $\Lambda \neq 0$ requires
only an appropriate formulation, an adjustment of the function spaces
and the sesquilinear form and a refinement of the compactness result
of $(ii)$. For $\Lambda$ equal to a positive constant, proofs can be
found in \cite{Mon03} and \cite{ACL18}.

An alternative approach for the derivation of a weaker existence
result in the sense of Corollary \ref{cor:existence} is the Eidus
principle of limiting absorption \cite{Eui62} (cf.~\cite[Chapter 8]{Urb20}), which we also employ in Section \ref{sec:LimitingAbs}. The
limiting absorption principle replaces Step (iii) of the first
approach but does still rely on the Helmholtz decomposition and the
compactness result (Steps (i) and (ii)). We emphasize that the
limiting absorption principle is more than just a method of proof: It
provides an additional information, namely the convergence of
solutions for a vanishing damping parameter.

The Helmholtz decomposition can be derived by elementary methods, we
present this in Section \ref{sec:Helmholtz}. The compact embedding of
the solution space is more technical and has a longer history:
Elementary calculations provide
$H(\curl,\Omega) \cap H(\div, \Omega)\subset H^1_\loc(\Omega, \C^3)$
and, thus, by Rellich's compact embedding
$H(\curl,\Omega) \cap H(\div, \Omega) \cptemb L^2_\loc(\Omega,
\C^3)$. However, the compact embedding holds only locally and
$H(\curl,\Omega) \cap H(\div, \Omega)$ cannot be compactly embedded in
$L^2(\Omega, \C^3)$, even for smooth domains \cite{ABDG98, Mur79}.
For $C^{1,1}$ domains, the Gaffney--Friedrichs inequality gives a
continuous embedding of the subspace of functions with vanishing
tangential boundary values, $H_0(\curl, \Omega) \cap H(\div, \Omega)$
or vanishing normal boundary values,
$H(\curl, \Omega) \cap H_0(\div, \Omega)$, in $H^1(\Omega,
\C^3)$. Then, Rellich's compactness theorem provides the desired
compactness of these spaces in $L^2(\Omega, \C^3)$. The
Gaffney--Friedrichs inequality was shown for smooth domains in
\cite{Fri55, Gaf51, Gaf55, Mor66}. For convex domains,
Gaffney--Friedrichs inequality was shown for vanishing tangential
boundary values in \cite{Ned82} and for vanishing normal boundary
values in \cite{Sar82}. 
For $C^{1,1}$-domains, the Gaffney--Friedrichs inequality was derived
in \cite{DL72, FT78, GR86} for vanishing tangential components and in
\cite{ABDG98, Cos91} for vanishing normal components.  For
Gaffney--Friedrichs inequality with more general boundary conditions
see \cite{Csa12}.  However, this inequality does not hold for
arbitrary Lipschitz domains (see \cite{ABDG98}) and a different
approach is required for the derivation of Maxwell's compactness
theorem.

For a class of piecewise smooth domains the compact embedding of
$H_0(\curl, \Omega) \cap H(\div, \Omega)$ and
$H(\curl, \Omega) \cap H_0(\div, \Omega)$ in $L^2(\Omega)$ was shown
in \cite{Wec74} and for general Lipschitz domains in \cite{Pic84, Web80}. This approach relies essentially on the construction of
suitable scalar and vector potentials.  The compactness result was
improved to a more quantitative estimate, namely the continuous
embedding in $H^{1/2}(\Omega, \C^3)$ using additionally regularity
results for the Dirichlet- and Neumann-problem, see also
\cite{Mon03}. The regularity results are based on the non-tangential
maximal functions \cite{JK81, JK82} and enable also inhomogeneous
$L^2(\Gamma)$-regular tangential or normal boundary values. The case
of inhomogeneous boundary value becomes highly relevant in the
analysis for Maxwell's equations with an impedance boundary condition.
More elementary regularity arguments provide an embedding in
$H^{1/2-\delta}(\Omega, \C^3)$ for $\delta >0$, see \cite{Cos88}; this
is sufficient for the desired compactness.

The compactness result is extended to mixed boundary values, where at
some part of the boundary the tangential trace vanishes while on the
remaining part the normal trace vanishes \cite{BPS16, Joc97}; an
extension to inhomogeneous $L^2(\Gamma)$-regular mixed boundary data
is presented in \cite{PS23}.  Related discussions on vector potentials
with mixed boundary conditions are presented in \cite{AB21}

\smallskip On polarization dependent boundary conditions: Variational
formulations of the time-harmonic Maxwell equations have mainly
addressed polarization independent boundary conditions. In
\cite{SIMA25} the homogenization of a thin layer of perfect conductors
is considered, it can lead to a polarization dependent interface
condition, which is strongly related to a polarization dependent
boundary conditions. An existence result for this kind of interface
condition is presented in \cite{BSW25}.  The novelty of the present
work is that it combines two qualitatively different boundary
conditions on the same part of the boundary, namely the reflection
boundary condition is some direction and an impedance boundary
condition in another direction.

\subsection{Organization of this text}
In Section \ref{sec:BoundaryCond}, we discuss the weak solution
concept for \eqref{eq:Maxwell-strong-E-H}. In particular, we discuss
the different formulations of the boundary condition and the
equivalence of these formulations under reasonable assumptions.
Section \ref{sec:Helmholtz} is devoted to Helmholtz decompositions
which allows us to simplify the problem: It is sufficient to consider
solutions and test-functions in a space of divergence-free
functions. The proof of Theorem \ref{thm:FredholmAlternative} is given
in Section \ref{sec:Fredholm}.  We make use of a well-known
compactness result for functions with bounded divergence and curl; in
order to have this exposition self-contained, we include the proof of
the compactness statement in Section \ref{sec.compactness}.  The same
compactness statement is also used in the limiting absorption
principle that is presented in Section \ref{sec:LimitingAbs}.  It
provides another proof of Corollary \ref{cor:existence}.

\section{Discussion of weak formulation and boundary
  conditions}\label{sec:BoundaryCond}

In order to motivate the weak solution concept of this article, let us
consider a weak solution $(E,H)$. A weak solution is given by
$E \in H_\Theta(\curl, \Omega, \Gamma)$ that satisfies \eqref{eq:WeakForm:Maxwell-in-E}, the magnetic field is set to
$H = (i \omega \mu)^{-1} \left(\curl E - f_h\right)$.

The strong equation \eqref{eq:Maxwell-strong-E-H:1} is satisfied by
the definition of $H$.  We use this relation for $H$ to substitute
$\curl E$ in \eqref{eq:WeakForm:Maxwell-in-E} and find
\begin{align}\label{eq:WeakFrom->MixedForm:1}
  &\int\limits_\Omega \left\{ i \omega H \cdot \curl \phi
    -\omega^2 \eps E \cdot \phi\right\}
    - i \omega \int\limits_{\Gamma} \Sigma(\nn{E}) \cdot \nn{\phi}
    =
    \int\limits_\Omega i \omega f_e \cdot \phi
\end{align}
for all $\phi \in H_\Theta(\curl, \Omega, \Gamma)$.  For
test-functions $\phi \in C^\infty_c(\Omega, \C^3)$ in
\eqref{eq:WeakFrom->MixedForm:1}, the boundary integral vanishes and
we obtain \eqref{eq:Maxwell-strong-E-H:2}.

It remains to check the boundary conditions.  For this step, we assume
that the solution has additional regularity such that boundary traces
are well defined. We consider an arbitrary
$\phi \in H_\Theta(\curl, \Omega, \Gamma)$ in
\eqref{eq:WeakFrom->MixedForm:1}, integrate the first term by parts
and insert \eqref{eq:Maxwell-strong-E-H:2}. This provides
\begin{align}\label{eq:WeakImpedanceRegular}
  \int\limits_{\Gamma} i \omega H \cdot \nn{\phi}
  - i \omega \int\limits_{\Gamma} \Sigma(\nn{E}) \cdot \nn{\phi}
  =
  0\,.
\end{align}
Therefore, the weak solution satisfies pointwise the boundary
conditions
\begin{subequations}
  \label{eq:BC-in-E}
  \begin{align}
    \label{eq:BC-in-E-1}
    &\Theta(\nu \times E)= 0 \,,\\
    \label{eq:BC-in-E-2}
    &[ H - \Sigma (\nu \times E)] \cdot (\nu \times \phi) = 0
      \quad \forall \phi  \text{ with } \Theta(\nu \times \phi) =0\,,
  \end{align}
\end{subequations}
where the first condition follows from the fact that we seek for a
solution $E \in H_\Theta(\curl, \Omega, \Gamma)$.  The second
condition, interpreted pointwise, implies that
$H - \Sigma (\nu \times E)$ is orthogonal to the kernel of $\Theta$ in
the tangential space.

Let us consider the situation of Lemma \ref{lem:AssumptionsLambda}
and a point $x\in \Gamma$. We claim that $(E,H)$ satisfies the
boundary condition \eqref{eq:Maxwell-strong-E-H:3} if and only if it
satisfies \eqref{eq:BC-in-E}. To show one implication, let $(E,H)$
satisfy \eqref{eq:BC-in-E}. We use the orthogonal projection $\Pi_x$
to the tangent space $T_x\Gamma$. By \eqref{eq:BC-in-E-2}, the
expression $\Pi_x [H - \Sigma (\nu \times E)](x)$ is orthogonal to the
kernel of $\Theta(x)$. This means that it is an element of $Z$, the
kernel of $\Lambda(x)$. Supressing the point $x$, we obtain
$\Lambda \Pi H = \Lambda\Pi \Sigma (\nu \times E)$.  The left-hand
side is identical to $-\Lambda((H\times\nu)\times\nu)$. On the
right-hand side, $\Pi$ acts trivially on $\Sigma (\nu \times E)$,
since the latter is a tangential vector. Since $\Lambda$ is the
inverse of $\Sigma$ on the kernel of $\Theta$ and $\nu \times E$ is in
this kernel, the right-hand side is $\nu \times E$.  We have therefore
concluded \eqref{eq:Maxwell-strong-E-H:3}; we note that we use in
\eqref{eq:Maxwell-strong-E-H:3} the more standard notation where the
normal vector is always written behind the fields.

Vice versa, let $(E,H)$ satisfy \eqref{eq:Maxwell-strong-E-H:3}. We
apply $\Theta$ on \eqref{eq:Maxwell-strong-E-H:3} and obtain from
$\Theta \circ \Lambda = 0$ that $\Theta(E \times \nu) = 0$, which
shows \eqref{eq:BC-in-E-1}. To deduce \eqref{eq:BC-in-E-2}, we apply
$\Sigma$ on \eqref{eq:Maxwell-strong-E-H:3} and multiply the equation
by $\phi \times \nu$ for $\phi$ satisfying
$\Theta(\phi \times \nu) =0$, which gives
$\phi \times \nu \cdot \Sigma (E \times \nu) =\phi \times \nu \cdot
\Sigma\Lambda ((H\times\nu)\times\nu)$.  We note that
$(\Sigma \Lambda)^\top$ is the identity on $Z^\perp$ and, thus
$(\Sigma \Lambda)^\top (\phi \times \nu)= \phi \times \nu$.  We obtain
$\phi \times \nu \cdot \Sigma (E \times \nu) =\phi \times \nu \cdot
((H\times\nu)\times\nu) = -\phi \times \nu \cdot H$, which is
\eqref{eq:BC-in-E-2}.

With these calculations, we have verified the last statement of Lemma
\ref{lem:AssumptionsLambda}.

\medskip Vice versa, we can motivate the weak formulation \eqref{eq:WeakForm:Maxwell-in-E} starting from the strong Maxwell system
\eqref{eq:Maxwell-strong-E-H:1}--\eqref{eq:Maxwell-strong-E-H:2} with
the boundary condition \eqref{eq:BC-in-E}: When we multiply
\eqref{eq:Maxwell-strong-E-H:1} with $\mu^{-1}$ and use, for arbitrary
$\phi \in H_\Theta(\curl, \Omega, \Gamma)$, the test-function
$\curl \phi$, we obtain
\begin{align}\label{eq:Derivation-WeakForm:1}
  \int\limits_\Omega \mu^{-1} \curl E \cdot \curl \phi
  = \int\limits_\Omega \left\{i \omega H \cdot \curl \phi
  + \mu^{-1} f_h\cdot \curl \phi\right\}\,.
\end{align}
We study the first term on the right-hand side. Integrating by parts
and using the identities \eqref{eq:Maxwell-strong-E-H:2} and
\eqref{eq:BC-in-E-2} we find
\begin{align}
	\begin{aligned}\label{eq:Derivation-WeakForm:2}
          \int\limits_\Omega i \omega H \cdot \curl \phi &= i \omega
          \int\limits_\Omega \curl H \cdot \phi + i \omega
          \int\limits_{\Gamma} H \cdot \nu \times \phi
          \\
          &= \omega^2 \int\limits_\Omega \eps E \cdot \phi +i \omega
          \int\limits_\Omega f_e \cdot \phi + i \omega
          \int\limits_{\Gamma} \Sigma(\nn{E}) \cdot \nn{\phi}\,.
	\end{aligned}
\end{align}
With this replacement in \eqref{eq:Derivation-WeakForm:1}, we find the
weak form \eqref{eq:WeakForm:Maxwell-in-E}. The identity
\eqref{eq:BC-in-E-1} restricts the solution space to
$H_\Theta(\curl, \Omega, \Gamma)$.

\subsection{Coercivity}\label{eq:Weak-Strong-BC-Equivalence}

Under certain assumptions on $\Lambda$, Lemma
\ref{lem:AssumptionsLambda} provides a suitable choice of $\Sigma$ and
$\Theta$. We have seen above that, with this choice, the weak solution
concept encodes \eqref{eq:Maxwell-strong-E-H:3}. It remains to show
that, for $\Lambda$ as in Lemma \ref{lem:AssumptionsLambda}, the maps
$\Sigma$ and $\Theta$ are well-defined and satisfy Assumption
\ref{ass:Coefficents}.

\begin{proof}[Proof of Lemma \ref{lem:AssumptionsLambda}]
  In the situation of the lemma with
  $Z = \ker(\Lambda(x))\subset \C^3$, the map
  $\Lambda(x)|_{Z^\perp} \colon Z^\perp\to Z^\perp$ is invertible with
  lower bound (independent of $x$). Since, additionally, $\Lambda$
  satisfies a uniform upper bound by the property
  $\Lambda\in L^\infty$, the inverse
  $(\Lambda(x)|_{Z^\perp})^{-1} \colon Z^\perp\to Z^\perp$ is uniformly
  bounded and coercive. In particular, $\Sigma$ is well defined and
  coercive on $Z^\perp$,
  \begin{align}
    \bar\zeta \cdot\Sigma(x)(\zeta)   \geq c_\Sigma \|\zeta\|^2
    \qquad \text{for all } \zeta \in Z^\perp
  \end{align}
  for some constant $c_\Sigma > 0$ that is independent of
  $x$. Decomposing an arbitrary vector $\xi\in \C^3$ as
  $\xi = z + \zeta$ with $z\in Z$ and $\zeta\in Z^\perp$, we can
  calculate, suppressing the point $x$,
  \begin{align*}
    (\Theta +\Sigma) (\xi) \cdot \bar\xi
    &= (\Theta +\Sigma) (z + \zeta) \cdot \bar\xi
    = (z + \Sigma(\zeta)) \cdot \bar\xi\\
    &= \|z\|^2 + \Sigma(\zeta) \cdot \bar\zeta
    \ge \|z\|^2 + c_\Sigma \|\zeta\|^2\,.
  \end{align*}
  This provides the coercivity \eqref{eq:Coerc:Sigma+Theta} of
  $\Theta +\Sigma$.
\end{proof}

The coercivity assumption \eqref{eq:Coerc:Sigma+Theta} is designed in
such a way that the boundary integral in the weak form
\eqref{eq:WeakForm:Maxwell-in-E} together with the function space
$H_\Theta(\curl, \Omega, \Gamma)$ provides full control over the
$\|\nn{E}\|_{L^2(\Gamma)}$-norm on the boundary
$\Gamma=\del\Omega$. This can be seen with the following calculation
for arbitrary $E \in H_\Theta(\curl, \Omega, \Gamma)$:
\begin{equation}
  \label{eq:EstBoundary}
  c_0 \|\nn{E}\|_{L^2(\Gamma)}^2
  \leq \int\limits_{\Gamma} (\Theta +\Sigma) (\nn{E}) \cdot \nn{\bar E}
  =\int\limits_{\Gamma} \Sigma (\nn{E}) \cdot \nn{\bar E}\,,
\end{equation}
where the equality uses $\Theta(\nn{E}) = 0$.

\subsection{Formulation in $H$ instead of $E$}

System \eqref{eq:Maxwell-strong-E-H} permits also a weak formulation
in terms of the magnetic field. Up to boundary regularity, it reads:
Find $H\in H(\curl, \Omega)$ such that
\begin{eqnarray}\nonumber
  &\int\limits_\Omega \left\{\eps^{-1} \curl H \cdot \curl \psi
    -\omega^2 \mu H \cdot \psi\right\}
    - i \omega \int\limits_{\Gamma} \Lambda \nnn{H} \cdot \nnn{\psi}
  \\\label{eq:WeakForm:Maxwell-in-H}
  &\qquad =
    \int\limits_\Omega \left\{- i \omega  f_h \cdot \psi
    + \varepsilon^{-1} f_e \cdot \curl \psi \right\}
    \text{ for all } \psi \,,
\end{eqnarray}
where test-functions $\psi$ are chosen in the same space as $H$.  The
underlying space is $H(\curl, \Omega)$, the additional requirement is
that the projection of $\nnn{\psi}$ onto $\operatorname{R}(\Lambda)$
is in the space $L^2(\Gamma)$.

For a solution $H\in H(\curl, \Omega, \Gamma)$ of
\eqref{eq:WeakForm:Maxwell-in-H}, we set
$E\coloneqq (i \omega \eps)^{-1} \left(-\curl H + f_e\right)$. The
formulation \eqref{eq:WeakForm:Maxwell-in-H} has the advantage that it
uses $\Lambda$. It does not require to formulate the problem with the
two auxiliary matrix functions $\Sigma$ and $\Theta$.

\smallskip Unfortunately, the weak formulation
\eqref{eq:WeakForm:Maxwell-in-H} has a major disadvantage concerning
compactness.  We recall that the literature provides compact
embeddings of $H(\curl, \Omega)\cap H(\div, \Omega)$ in $L^2(\Omega)$
when an $L^2(\Gamma)$-control of tangential or normal components of
the functions are available. For a singular map $\Lambda \neq 0$, we
have only control of one tangential component through the boundary
term since $\Lambda$ is not coercive on the entire tangent space.  At
the same time, we do not have full control of the normal component:
Inserting $\psi = \nabla \fhi$ in \eqref{eq:WeakForm:Maxwell-in-H},
assuming, for simplicity, $f_h =0$, we find
\begin{equation}
  -\int\limits_\Omega \omega^2\mu H \cdot \nabla \fhi
  = i\omega \int\limits_\Gamma \Lambda \nnn{H} \cdot  \nnn{\nabla\varphi}\,.
\end{equation}
The boundary term does not disappear for arbitrary
$\fhi \in H^1(\Omega)$ but only for $\fhi \in H^1(\Omega)$ such that
$\nnn{\nabla \fhi} \in \operatorname{R}(\Lambda)^\perp =
\ker(\Lambda)$. Thus, we obtain only a restricted information on the
normal component of $H$.  Consequently, one has a mixed control over
the tangential and normal components.  To the best knowledge of the
authors, no compactness result of the literature is applicable in this
setting.

\section{Helmholtz decomposition}\label{sec:Helmholtz}

Our aim is to prove Theorem \ref{thm:FredholmAlternative} with the
help of the compactness result of Lemma~\ref{lem:MaxwellCompactness}. This compactness result requires a control
of the divergence of $E$. We obtain this control in two steps: With a
Helmholtz decomposition of $L^2(\Omega, \C^3)$, we restrict the
analysis to divergence-free right-hand sides, see Lemma
\ref{lem:div-free-data}. With a Helmholtz decomposition of
$H_\Theta(\curl, \Omega,\Gamma)$, we restrict the set of solutions to
$\eps$-divergence-free functions.

We start by introducing the space $G$ of gradients. The set $G$ can be
understood as a subspace of $L^2(\Omega,\C^3)$, but also as a subspace
of $H(\curl,\Omega)$ since the rotation of gradients vanishes.  It is
even a subspace of $H_\Theta(\curl,\Omega, \Gamma)$ since the
tangential derivatives of an $H^1_0$-function vanish.  For a given
coefficient $\eps$, we define $D_\eps$ and $Y_\eps$ as spaces of
$\eps$-divergence-free functions. In the subsequent definition,
differential operators are understood in the sense of distributions.
\begin{align}
  \label{eq:space-G}
  G &\coloneqq \left\{ u\in L^2(\Omega)\mid
      \exists \psi\in H^1_0(\Omega) \colon
      u = \nabla\psi \right\} \,,
  \\
  \label{eq:space-Deps}
  D_\eps &\coloneqq \left\{ u\in L^2(\Omega, \C^3)\mid \div (\eps u) = 0 \right\} \,,
  \\
  \label{eq:space-Yeps}
  Y_\eps &\coloneqq H_\Theta(\curl \Omega, \Gamma) \cap D_\eps \,.
\end{align}
The choice is such that $D_\eps$ is the orthogonal complement of $G$
in the space $L^2(\Omega, \C^3)$ with the weighted scalar product
$\la u, v\ra_\eps = \int_\Omega \eps\, u\cdot \bar{v}$. Furthermore,
because of $\curl(\nabla\psi) = 0$ and
$\nu \times \nabla \psi|_{\Gamma} = 0$, the subspaces $Y_\eps$ and $G$
are also orthogonal with respect to the scalar product
$\la u, v\ra_X \coloneqq \int_\Omega \{ \eps\, u\cdot \bar v +
\mu^{-1} \curl u\cdot \curl\bar v \} + \int_{\Gamma} \{\nu \times
u\cdot \nu \times \bar v \}$. By construction, $Y_\eps$ is the
$\la ., .\ra_X$-orthogonal complement of $G$ in
$H_\Theta(\curl, \Omega, \Gamma)$. The definitions therefore imply
directly the following two Helmholtz decompositions.

\begin{lemma}[Helmholtz decomposition]\label{lem:Helmholtz-dec}
  The space $L^2(\Omega, \C^3)$ has the orthogonal decomposition
  $L^2(\Omega, \C^3) = D_\eps \oplus_\eps G$. In particular, an
  arbitrary element $u\in L^2(\Omega)$ can be written uniquely as
  $u = v + \nabla\psi$ with $v\in D_\eps$ and $\psi\in H^1_0(\Omega)$.

  The space $X \coloneqq H_\Theta(\curl,\Omega, \Gamma)$ has the
  orthogonal decomposition $X = Y_ \eps \oplus_X G$. In particular, an
  arbitrary element $u\in X$ can be written uniquely as
  $u = v + \nabla\psi$ with $v\in Y_\eps$ and $\psi\in H^1_0(\Omega)$.
\end{lemma}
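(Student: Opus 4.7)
The plan is to treat the two decompositions in sequence: the second one will follow from the first almost for free, because the only nontrivial ingredient is the construction of a scalar potential $\psi \in H^1_0(\Omega)$ adapted to the weighted inner product $\la \cdot, \cdot\ra_\eps$, and that construction is identical in both settings. Orthogonality of $G$ and $D_\eps$ in each of the two inner products is just a distributional integration by parts, so the heart of the argument is one single application of the Lax--Milgram lemma.

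For the $L^2$-decomposition I would first verify orthogonality: given $v \in D_\eps$ and $\nabla \psi \in G$ with $\psi \in H^1_0(\Omega)$, we have $\la v, \nabla\psi\ra_\eps = \int_\Omega \eps\, v \cdot \nabla \bar\psi$, which vanishes because $\psi \in H^1_0(\Omega)$ can be approximated by $C^\infty_c(\Omega)$ functions and $\ddiv(\eps v) = 0$ in the distributional sense. Next, given an arbitrary $u \in L^2(\Omega, \C^3)$, I construct $\psi \in H^1_0(\Omega)$ as the unique solution of
\begin{equation*}
  \int_\Omega \eps \nabla \psi \cdot \nabla \bar\fhi \,\dx
  = \int_\Omega \eps\, u \cdot \nabla \bar\fhi \,\dx
  \qquad \forall\, \fhi \in H^1_0(\Omega)\,.
\end{equation*}
The Lax--Milgram lemma applies by the coercivity of $\eps$ from \eqref{eq:Coercivity:eps-mu} combined with the Poincar\'e inequality on $H^1_0(\Omega)$. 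Setting $v \coloneqq u - \nabla \psi$ yields $\int_\Omega \eps\, v \cdot \nabla \bar\fhi = 0$ for all $\fhi \in H^1_0(\Omega)$, which is precisely $\ddiv(\eps v) = 0$, i.e.\ $v \in D_\eps$. Uniqueness of the decomposition follows from the orthogonality.

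For the second decomposition, I would first check that $G$ is actually a subspace of $X \coloneqq H_\Theta(\curl, \Omega, \Gamma)$. For $\psi \in H^1_0(\Omega)$ one has $\curl \nabla \psi = 0$, and using $\psi|_{\Gamma} = 0$ together with the divergence theorem applied to $\psi\, \curl \fhi$ one obtains
\begin{equation*}
  \int_\Omega \bigl\{\curl (\nabla\psi) \cdot \fhi - \nabla\psi \cdot \curl \fhi\bigr\} \,\dx
  = -\int_\Omega \nabla\psi \cdot \curl \fhi \,\dx = 0
  \qquad \forall\, \fhi \in H^1(\Omega)\,,
\end{equation*}
so that $\nn{\nabla\psi} = 0$, whence $\Theta(\nn{\nabla \psi}) = 0$ and $\nabla\psi \in X$. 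Orthogonality of $Y_\eps$ and $G$ in $\la \cdot, \cdot\ra_X$ then decomposes into three pieces: the bulk $L^2$-piece vanishes by the first part, the $\curl$-piece vanishes because $\curl \nabla \psi = 0$, and the boundary piece vanishes because $\nn{\nabla\psi} = 0$.

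Finally, for surjectivity of the sum: given $u \in X \subset L^2(\Omega, \C^3)$, the first decomposition yields $u = v + \nabla \psi$ with $v \in D_\eps$ and $\psi \in H^1_0(\Omega)$. Since both $u$ and $\nabla \psi$ belong to $X$, so does $v = u - \nabla \psi$; combined with $v \in D_\eps$ this gives $v \in Y_\eps$, and uniqueness is again inherited from orthogonality. I do not foresee a real obstacle here --- the only place where anything has to be proved rather than checked is the Lax--Milgram step, and that step is completely standard once coercivity of $\eps$ is in hand.
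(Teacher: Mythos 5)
Your proof is correct and follows essentially the same route as the paper, which simply observes that $D_\eps$ (resp.\ $Y_\eps$) is by construction the orthogonal complement of $G$ with respect to $\la \cdot,\cdot\ra_\eps$ (resp.\ $\la \cdot,\cdot\ra_X$) and invokes the Hilbert-space projection theorem. Your Lax--Milgram step is precisely the explicit variational characterization of that orthogonal projection, so the two arguments coincide up to the level of detail spelled out.
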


We use the Helmholtz decomposition of $L^2(\Omega, \C^3)$ in order to
replace the data $f_e$ with divergence-free data. This allows us to
control the divergence of the unknown $E$.

\begin{lemma}[Reduction to divergence-free
  data]\label{lem:div-free-data}
  Let $f_e, f_h \in L^2(\Omega, \C^3)$ be given and let $\eps$ satisfy
  Assumption \ref{ass:Coefficents}.  Using Lemma
  \ref{lem:Helmholtz-dec}, we find $h\in D_\eps$ and
  $\chi \in H^1_0(\Omega)$ such that
  $(i\omega \eps)^{-1}f_e = h + \nabla \chi$. We use
  $\tilde{f}_e \coloneqq (i\omega \eps) h$ with
  $\div(\tilde{f}_e) = 0$. Then,
  $E \in H_\Theta(\curl,\Omega, \Gamma)$ is a solution for
  \eqref{eq:WeakForm:Maxwell-in-E} if and only if
  $\tilde{E} = E - \nabla \chi \in H_\Theta(\curl,\Omega, \Gamma)$
  satisfies
  \begin{align}
    \label{eq:Maxwell-in-tilde-E}
    \begin{aligned}
      &\int\limits_\Omega \left\{\mu^{-1} \curl \tilde{E} \cdot \curl \phi
        -\omega^2 \eps \tilde{E} \cdot \phi\right\}
        - i \omega \int\limits_{\Gamma} \Sigma (\nn{\tilde{E}}) \cdot \nn{\phi}
        \\
      &\qquad\qquad = \int\limits_\Omega \left\{i \omega \tilde{f}_e \cdot \phi
        + \mu^{-1} f_h \cdot \curl \phi \right\}
        \quad \forall \phi \in H_\Theta(\curl,\Omega, \Gamma)\,.
    \end{aligned}
  \end{align}
  Moreover, solutions $\tilde{E}$ are $\eps$-divergence-free in the
  sense that $\div(\eps\tilde{E})=0$.
\end{lemma}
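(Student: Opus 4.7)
The proof plan is a direct substitution argument: I would use the Helmholtz decomposition of Lemma \ref{lem:Helmholtz-dec} applied to $(i\omega\eps)^{-1}f_e$ to produce the advertised $h \in D_\eps$ and $\chi \in H^1_0(\Omega)$, verify that the map $E \mapsto E - \nabla\chi$ is a bijection between the solution sets of \eqref{eq:WeakForm:Maxwell-in-E} and \eqref{eq:Maxwell-in-tilde-E} by inserting the shift into the weak forms, and finally obtain the divergence-free property of $\tilde{E}$ by testing \eqref{eq:Maxwell-in-tilde-E} against gradients of $H^1_0(\Omega)$-functions. Note that $\tilde{f}_e = i\omega\eps h$ automatically satisfies $\div \tilde{f}_e = 0$ in the distributional sense because $h \in D_\eps$.

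The key observation driving the substitution is that $\nabla\chi$ lies in $G \subset H_\Theta(\curl,\Omega,\Gamma)$: it satisfies $\curl(\nabla\chi) = 0$ and $\nn{\nabla\chi} = 0$, the latter because $\chi \in H^1_0(\Omega)$ (this is the same fact already used in Section \ref{sec:Helmholtz} to include $G$ inside $H_\Theta(\curl,\Omega,\Gamma)$, and it follows from \eqref{eq:HGL2-curl} together with integration by parts against the zero boundary values of $\chi$). Consequently, inserting $E = \tilde{E} + \nabla\chi$ into \eqref{eq:WeakForm:Maxwell-in-E} leaves the curl and boundary terms unchanged in form and only adds the single volume term $-\omega^2 \int_\Omega \eps\, \nabla\chi \cdot \phi$ on the left. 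Moving it to the right and using $i\omega f_e = -\omega^2\eps(h+\nabla\chi)$ together with $i\omega\tilde{f}_e = -\omega^2\eps h$, the right-hand side collapses to exactly that of \eqref{eq:Maxwell-in-tilde-E}. Since the shift $\tilde{E} \mapsto \tilde{E} + \nabla\chi$ is a bijection of $H_\Theta(\curl,\Omega,\Gamma)$ onto itself, the equivalence of the two formulations is symmetric and both directions follow.

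For the divergence-free property I would specialise $\phi = \nabla\psi$ with $\psi \in H^1_0(\Omega)$ in \eqref{eq:Maxwell-in-tilde-E}. This is admissible because $\nabla\psi \in H_\Theta(\curl,\Omega,\Gamma)$, and all terms involving $\curl\phi$ or $\nn{\phi}$ vanish, leaving only
\begin{equation*}
  -\omega^2 \int_\Omega \eps\, \tilde{E} \cdot \nabla\psi \;=\; i\omega \int_\Omega \tilde{f}_e \cdot \nabla\psi.
\end{equation*}
The right-hand side vanishes by the distributional identity $\div \tilde{f}_e = 0$ applied to the test function $\psi \in H^1_0(\Omega)$. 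Hence $\int_\Omega \eps\,\tilde{E} \cdot \nabla\psi = 0$ for every such $\psi$, which is precisely $\div(\eps\tilde{E}) = 0$ in the sense of distributions.

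I do not anticipate any genuine obstacle: the entire argument is a transparent consequence of Lemma \ref{lem:Helmholtz-dec} and of the inclusion $G \subset H_\Theta(\curl,\Omega,\Gamma)$. The one point that deserves explicit mention is the boundary identity $\nn{\nabla\chi} = 0$ for $\chi \in H^1_0(\Omega)$, since it is precisely what makes the boundary integral in \eqref{eq:WeakForm:Maxwell-in-E} insensitive to the gradient shift and thereby permits the reduction to divergence-free data.
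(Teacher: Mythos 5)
Your proposal is correct and follows essentially the same route as the paper: a direct substitution $E = \tilde{E} + \nabla\chi$, using that $\curl(\nabla\chi)=0$ and $\nn{\nabla\chi}=0$ for $\chi\in H^1_0(\Omega)$ so that only the $\eps$-volume term shifts, and then testing with gradients $\nabla\psi$, $\psi\in H^1_0(\Omega)$, to obtain $\div(\eps\tilde{E})=0$. Your version merely spells out the bookkeeping $i\omega f_e = -\omega^2\eps(h+\nabla\chi)$ versus $i\omega\tilde{f}_e = -\omega^2\eps h$ that the paper leaves as an ``elementary substitution.''
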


\begin{proof}
  When $E$ is a solution, elementary substitutions show that
  $\tilde E$ is a solution of \eqref{eq:Maxwell-in-tilde-E}. Indeed,
  the curl of a gradient vanishes, the second terms on both sides are
  modified in the same way, the boundary integral is unchanged since
  $\chi$ vanishes on $\Gamma$ and, thus, tangential components of
  $\nabla\chi$ vanish along the boundary.

  The opposite implication is obtained with the same calculation.
  
  In order to obtain $\div(\eps\tilde{E})=0$, it is sufficient to use
  a gradient $\phi = \nabla\fhi$ for $\fhi \in H^1_0(\Omega)$ in
  \eqref{eq:Maxwell-in-tilde-E}.
\end{proof}

Lemma \ref{lem:div-free-data} allows us to consider only right-hand
sides $f_e$ with $\div(f_e) =0$ in the following.  By doing so, we can
also restrict the solution space (and, accordingly, the space of
test-functions) in \eqref{eq:WeakForm:Maxwell-in-E} to
$\eps$-divergence-free functions.  For these functions, we write $E$
and $\phi$, dropping the tilde.

\begin{lemma}[Equivalent formulation in $Y_\eps$]
  \label{lem:Maxwell-Y}
  Let $\Omega\subset\R^3$ be a bounded Lipschitz domain, $\omega>0$,
  $\eps, \mu$, $\Sigma, \Theta$ as in Assumption
  \ref{ass:Coefficents}. Let $f_e, f_h \in L^2(\Omega, \C^3)$ be
  right-hand sides with $\div(f_e) = 0$. In this situation, the weak
  problem formulated in \eqref{eq:WeakForm:Maxwell-in-E} is equivalent
  to the following problem: Find $E \in Y_\eps$ such that the equation
  in \eqref{eq:WeakForm:Maxwell-in-E} holds for all test-functions
  $\phi \in Y_\eps$.
\end{lemma}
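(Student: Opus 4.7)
The plan is to exploit the Helmholtz decomposition $X \coloneqq H_\Theta(\curl,\Omega,\Gamma) = Y_\eps \oplus_X G$ provided by Lemma \ref{lem:Helmholtz-dec}, together with the hypothesis $\div f_e = 0$. The equivalence is proved by two implications.

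For the forward direction, I would start from a solution $E\in X$ of \eqref{eq:WeakForm:Maxwell-in-E} and test with gradients $\phi = \nabla\fhi$ for arbitrary $\fhi\in H^1_0(\Omega)$. Such $\phi$ lies in $G\subset X$: the tangential trace $\nn{\nabla\fhi}$ vanishes because $\fhi$ vanishes on $\Gamma$, hence $\Theta(\nn{\nabla\fhi}) = 0$. Substituting, the curl terms drop out (curl of a gradient), the boundary integral vanishes, and the equation reduces to
\begin{equation*}
  -\omega^2 \int_\Omega \eps E \cdot \nabla\fhi = i\omega \int_\Omega f_e \cdot \nabla\fhi.
\end{equation*}
Integrating the right-hand side by parts and using $\div f_e = 0$ together with $\fhi|_\Gamma = 0$ shows it vanishes. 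Hence $\int_\Omega \eps E\cdot \nabla\fhi = 0$ for all $\fhi\in H^1_0(\Omega)$, i.e.~$\div(\eps E)=0$, so $E\in Y_\eps$. Restricting the class of test-functions from $X$ to $Y_\eps\subset X$ is then trivial.

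For the reverse direction, let $E\in Y_\eps$ solve the restricted problem and let $\phi\in X$ be arbitrary. I would write the Helmholtz decomposition $\phi = \phi_Y + \nabla\psi$ with $\phi_Y\in Y_\eps$ and $\psi\in H^1_0(\Omega)$, and check that both sides of \eqref{eq:WeakForm:Maxwell-in-E} agree on $\phi_Y$ (by assumption) and separately on $\nabla\psi$. On the $\nabla\psi$-part, the curl terms vanish and the boundary term vanishes since $\nn{\nabla\psi} = 0$, leaving
\begin{equation*}
  -\omega^2\int_\Omega \eps E\cdot\nabla\psi = i\omega\int_\Omega f_e\cdot\nabla\psi.
\end{equation*}
Both sides are zero by integration by parts: the left-hand side because $\div(\eps E)=0$ and $\psi|_\Gamma = 0$, and the right-hand side for the analogous reason with $\div f_e = 0$. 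Adding the two contributions gives the full equation on $X$.

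I expect no serious obstacle here; the statement is essentially an algebraic corollary of Lemma \ref{lem:Helmholtz-dec}. The only points requiring care are (a) verifying that $\nabla\fhi$ with $\fhi\in H^1_0(\Omega)$ actually lies in $H_\Theta(\curl,\Omega,\Gamma)$, which follows from $\nn{\nabla\fhi}=0$, and (b) keeping track of the fact that the boundary term $-i\omega\int_\Gamma \Sigma(\nn{E})\cdot\nn{\phi}$ is insensitive to modifications of $\phi$ by a gradient of an $H^1_0$-function. Both are consequences of the definition of $G$ in \eqref{eq:space-G} and the vanishing tangential trace of such gradients already recorded before Lemma \ref{lem:Helmholtz-dec}.
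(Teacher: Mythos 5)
Your proof is correct and follows essentially the same route as the paper: both directions rest on the Helmholtz decomposition $X = Y_\eps \oplus_X G$ of Lemma \ref{lem:Helmholtz-dec} and on the observation that every term of \eqref{eq:WeakForm:Maxwell-in-E} vanishes on gradient test-functions $\nabla\psi$ with $\psi\in H^1_0(\Omega)$. The only cosmetic difference is that you re-derive $\div(\eps E)=0$ by testing with gradients, whereas the paper cites this as the last assertion of Lemma \ref{lem:div-free-data} — the underlying computation is identical.
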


\begin{proof}
  Let $E$ be a solution of the new problem, i.e., $E\in Y_\eps$ and
  \eqref{eq:WeakForm:Maxwell-in-E} holds for test functions
  $\phi \in Y_\eps$. Given an arbitrary test-function
  $\phi \in X = H_\Theta(\curl, \Omega, \Gamma)$, we write
  $\phi = \fhi + \nabla\psi$ with $\fhi\in Y_\eps$ and
  $\psi\in H^1_0(\Omega)$ as outlined in Lemma \ref{lem:Helmholtz-dec}.  Using that \eqref{eq:WeakForm:Maxwell-in-E} is
  linear in $\phi$, we can treat the contributions
  separately. Inserting $\nabla\psi$ in
  \eqref{eq:WeakForm:Maxwell-in-E}, all terms vanish.  Inserting
  $\fhi$, the equality holds since $E$ is a solution of the
  $Y_\eps$-problem.  This shows that \eqref{eq:WeakForm:Maxwell-in-E}
  is satisfied for arbitrary test-functions
  $\phi \in H_\Theta(\curl,\Omega, \Gamma)$.

  When $E$ is a solution of the original weak form, then
  \eqref{eq:WeakForm:Maxwell-in-E} holds, in particular, for
  test-functions $\phi \in Y_\eps$. The fact that $E$ is indeed an
  element of $Y_\eps$ was observed in Lemma \ref{lem:div-free-data}.
\end{proof}

\section{Verification of the Fredholm alternative}
\label{sec:Fredholm}

With the above considerations, the Maxwell system has a symmetric weak
formulation in the space $Y_\eps$ (we recall that now only
divergence-free right-hand sides $f_e$ are considered). Using the
compactness result of Lemma \ref{lem:MaxwellCompactness} below, some
functional analysis provides the Fredholm alternative of Theorem
\ref{cor:existence}.

\begin{proof}[Proof of Theorem \ref{cor:existence}]
  We recall that $Y_\eps \subset H_\Theta(\curl \Omega, \Gamma)$
  denotes the subspace of $\eps$-divergence-free functions. We define
  two sesquilinear forms $a, b \colon Y_\eps \times Y_\eps \to \C$ and
  an anti-linear right-hand side $f \colon Y_\eps \to \C$ by setting,
  for every $u, \phi\in Y_\eps$,
  \begin{align*}
    &a(u, \phi) \coloneqq
      \int\limits_\Omega \left\{ u\cdot \bar{\phi}
      + \mu^{-1}\curl u \cdot \curl \bar{\phi} \right\} - i \omega
      \int\limits_{\Gamma} \left\{\Sigma(\nn{u})\cdot \nn{\bar{\phi}} \right\}\,, 
    \\
    &b(u, \phi) \coloneqq
      \int\limits_\Omega \left\{ u\cdot \bar{\phi}
      + \omega^2 \eps\, u \cdot \bar{\phi}\right\}\,, \quad
      f(\phi) \coloneqq
      \int\limits_\Omega \left\{i \omega f_e \cdot \bar{\phi}
      + \mu^{-1} f_h \cdot  \curl \bar{\phi} \right\} \,.
  \end{align*}
  By Lemma~\ref{lem:Maxwell-Y}, the weak formulation of the Maxwell
  problem is equivalent to: Find $E\in Y_\eps$ such that
  \begin{equation}
    \label{eq:bilinear-equation}
    a(E, \phi) - b(E, \phi) = f(\phi) \qquad \forall\ \phi \in Y_\eps\,.
  \end{equation}
  
  The sesquilinear form $a$ defines a map $A \colon Y_\eps\to Y_\eps'$
  from $Y_\eps$ into the (anti-)dual space $Y_\eps'$ with the
  definition $Au \coloneqq a(u, \cdot)$. By definition of the scalar
  product in $Y_\eps \subset X = H_\Theta(\curl,\Omega, \Gamma)$ and
  the estimate \eqref{eq:EstBoundary}, the form $a$ is coercive on
  $Y_\eps$.  The Lemma of Lax--Milgram implies that
  $A \colon Y_\eps\to Y_\eps'$ is invertible.
	
  We now exploit that the embedding
  $\iota \colon Y_\eps \to L^2(\Omega,\C^3)$ is compact, see
  Lemma~\ref{lem:MaxwellCompactness}. The multiplication map
  $B \colon u \mapsto (1 + \omega^2 \eps) \,u$ corresponding to $b$ is
  linear and bounded as a map
  $B \colon L^2(\Omega,\C^3) \to L^2(\Omega,\C^3)$. We denote the
  concatenation with an embedding into $Y_\eps'$ with the same letter
  and write $B \colon L^2(\Omega,\C^3) \to Y_\eps'$.
	
  The field $E\in Y_\eps$ solves \eqref{eq:bilinear-equation} if and
  only if
  \begin{align*}
    A\, E - (B\circ\iota)\, E = f \qquad \text{ in } Y_\eps'\,.
  \end{align*}
  Applying $A^{-1}$, we find the equivalent relation
  \begin{equation*}
    (\operatorname{id} - A^{-1} \circ B\circ\iota) E = A^{-1} f
    \qquad \text{ in } Y_\eps\,.
  \end{equation*}
  The operator $A^{-1} \circ B\circ \iota$ is compact, since $\iota$
  is compact and the other operators are continuous. Standard
  functional analysis results imply that the operator
  $F \coloneqq \operatorname{id} - A^{-1} \circ B\circ\iota$ is a
  Fredholm operator of index zero, see, e.g., \cite[Theorem
  11.8]{Alt16}. By their definition, such operators satisfy the
  Fredholm alternative: The kernel is trivial if and only if the
  operator is surjective.
\end{proof}

\section{Compactness property}\label {sec.compactness}

In the previous section, we derived the Fredholm
alternative from a compact embedding: We need that the solution space
is compactly embedded in $L^2(\Omega, \C^3)$. In our application, we
considered functions with a vanishing $\eps$-divergence, but this is
actually not needed for the compactness. The compactness only needs
that the $\eps$-divergence is controlled in $L^2$ (just as we demand
the control of the curl).  We use the following spaces:
\begin{eqnarray}\nonumber
  H(\div \eps,\Omega) \coloneqq \left\{u \in L^2(\Omega,\C^3)
  \ \middle|\ 
  \exists f \in L^2(\Omega, \C)\colon\phantom{\int}\right.
  \qquad\qquad \\
  \left.
  \int_\Omega f \phi = \int_\Omega \eps u \cdot \nabla \phi \
  \forall \phi \in C^\infty_c(\Omega) \right\}\,,
  \label{eq:H-div}
\end{eqnarray}
and
\begin{eqnarray}\nonumber
  H(\div \eps, \Omega, \Gamma) \coloneqq \left\{u \in H(\div \eps,\Omega)
  \ \middle|\ 
  \exists g \in L^2(\Gamma, \C) \colon\phantom{\int}\right.
  \qquad\qquad\qquad
  \\
  \left.
  \int_\Omega \left\{\div (\eps u) \phi + \eps u \cdot \nabla \phi \right\}
  = \int_\Gamma g \phi \quad \forall \phi \in H^1(\Omega)\right\}\,.
  \label{eq:HGL2-div}
\end{eqnarray}
Norms and scalar products are defined in the natural way, using
$L^2$-norms of all the given quantities.  For functions $u$ in the
second space, the normal trace of $u$ is defined as
$\nu \cdot (\eps u)|_{\Gamma} \coloneqq g$.

The following two compactness results are well known. They both
require an $L^2(\Omega)$-control over curl and divergence and an
$L^2(\Gamma)$-control of either the normal or tangential boundary
values. In our main result, we have used the compact embedding \eqref{eq:compact-claim1}. The space $Y_\eps$ is a subspace of the left-hand
side; for functions $u\in Y_\eps$ holds $\div(\eps u) = 0$, which
implies, in particular, that $\div(\eps u)$ is an
$L^2(\Omega)$-function. Additionally, for functions $u\in Y_\eps$,
there holds $\Theta(\nn{u}) = 0$, but we exploit only the
$L^2(\Gamma)$-control of $\nn{u}$.

In the result \eqref{eq:compact-claim2}, the normal trace of
functions is controlled in $L^2(\Gamma)$.

\begin{lemma}[Mawell compactness
  theorem]\label{lem:MaxwellCompactness}
  Let $\Omega\subset \R^3$ be a Lipschitz domain and $\eps, \mu$ as in
  Assumption \ref{ass:Coefficents}. Then, the following embeddings are
  compact:
  \begin{align}
    \label{eq:compact-claim1}
    H(\curl, \Omega, \Gamma) \cap H(\div \eps,\Omega)
    \cptemb L^2(\Omega, \C^3)\,,
    \\
    \label{eq:compact-claim2}
    H(\curl, \Omega) \cap H(\div \mu,\Omega, \Gamma)
    \cptemb L^2(\Omega, \C^3)\,.
  \end{align}
\end{lemma}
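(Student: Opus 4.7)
I would prove \eqref{eq:compact-claim1}; the embedding \eqref{eq:compact-claim2} follows by the symmetric argument with the roles of tangential and normal traces interchanged. Let $(u_n)$ be a bounded sequence in $V := H(\curl, \Omega, \Gamma) \cap H(\div\eps, \Omega)$. The overall strategy is a Helmholtz splitting $u_n = \nabla p_n + z_n$, with the gradient part handled by Rellich and the solenoidal remainder by a vector-potential argument in the spirit of Weck--Weber--Picard \cite{Wec74, Web80, Pic84}.

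For the splitting, define $p_n \in H^1_0(\Omega)$ by Lax--Milgram: $\int_\Omega \eps \nabla p_n \cdot \overline{\nabla\psi} = \int_\Omega \eps u_n \cdot \overline{\nabla\psi}$ for every $\psi \in H^1_0(\Omega)$, and set $z_n := u_n - \nabla p_n$. Then $\div(\eps z_n) = 0$, $\curl z_n = \curl u_n$ is bounded in $L^2$, and $\nu \times z_n|_\Gamma = \nu \times u_n|_\Gamma$ is bounded in $L^2(\Gamma)$ (the tangential trace of $\nabla p_n$ vanishes because $p_n|_\Gamma = 0$). The gradient part is elementary: $(p_n)$ is bounded in $H^1_0$, so after extracting weak limits $u_n \weakto u$ in $V$ and $p_n \weakto p$ in $H^1_0$, Rellich yields $p_n \to p$ strongly in $L^2$. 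Testing the equation for $p_n - p$ against itself and using coercivity of $\eps$,
\[
c_0 \|\nabla(p_n-p)\|_{L^2}^2 \leq -\Re \int_\Omega \div(\eps(u_n-u)) \, \overline{(p_n-p)}\,,
\]
and the right-hand side tends to $0$ because $\div(\eps u_n) \weakto \div(\eps u)$ in $L^2$ and $p_n \to p$ strongly in $L^2$. Hence $\nabla p_n \to \nabla p$ strongly in $L^2$.

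The main effort is compactness of $(z_n)$. Since $\eps z_n$ is divergence-free on the bounded Lipschitz domain $\Omega$, I would construct a vector potential $A_n \in H^1(\Omega, \C^3)$ with $\curl A_n = \eps z_n$, a gauge $\div A_n = 0$, and a boundary condition such as $\nu \cdot A_n|_\Gamma = 0$; a standard div-curl estimate then provides $\|A_n\|_{H^1} \leq C\|\eps z_n\|_{L^2}$, whence Rellich furnishes an $L^2$-convergent subsequence for $A_n$. Strong $L^2$-convergence of $z_n$ itself is recovered by a pairing argument: for test fields $\phi$ ranging over $V$, integration by parts converts $\int_\Omega z_n \cdot \overline\phi$ into an expression involving $A_n$ in $L^2$ together with the boundary integral $\int_\Gamma (\nu \times z_n) \cdot \overline\phi$, and both contributions depend continuously on quantities that are already compact.

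The principal obstacle is this last step: the trace space of $H^1(\Omega)$ is $H^{1/2}(\Gamma) \subsetneq L^2(\Gamma)$, so one cannot lift $\nu \times z_n|_\Gamma$ to an $H^1$-extension and simply reduce to the homogeneous-boundary case. The inhomogeneous $L^2(\Gamma)$-data must be tracked throughout, either via the vector-potential pairing sketched above, or by invoking Costabel's sharp regularity $H^{1/2-\delta}(\Omega)$ for div-curl fields on Lipschitz domains \cite{Cos88}. The construction of $A_n$ on a general bounded Lipschitz domain---possibly modulo a finite-dimensional space of harmonic Neumann fields reflecting the topology of $\Omega$---is itself a standard but nontrivial ingredient that carries the technical weight of the proof.
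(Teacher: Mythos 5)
Your overall architecture---a Helmholtz splitting, compactness of the gradient part via Rellich plus an energy identity, and a vector potential for the solenoidal remainder---is workable, and your pairing step is a genuinely different replacement for the paper's Step~3: writing $\int_\Omega \eps(z_n-z_m)\cdot\overline{(z_n-z_m)} = \int_\Omega \curl(A_n-A_m)\cdot\overline{(z_n-z_m)}$ and integrating by parts trades the problem for the strong $L^2(\Omega)$- and $L^2(\Gamma)$-convergence of $A_n$ paired against the merely bounded $\curl z_n$ and $\nu\times z_n|_\Gamma$, whereas the paper instead takes a vector potential for $\curl u_j$, writes the curl-free remainder as $\nabla\phi_j$, and analyzes a Dirichlet problem with data compact in $H^{-1}(\Omega)\times H^{1/2}(\Gamma)$. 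However, there is a genuine gap exactly where you place the ``technical weight'': the estimate $\|A_n\|_{H^1(\Omega)}\le C\|\eps z_n\|_{L^2(\Omega)}$ for a potential gauged by $\div A_n=0$ and $\nu\cdot A_n|_\Gamma=0$ is precisely the Gaffney--Friedrichs inequality, and this inequality is \emph{false} on general bounded Lipschitz domains (see \cite{ABDG98}, as the paper's own literature survey points out); it holds for convex or $C^{1,1}$ domains only. On a Lipschitz domain such a gauged potential lies only in $H^{1/2}(\Omega)$, which still embeds compactly into $L^2(\Omega)$ but does not deliver the strong $L^2(\Gamma)$-convergence of traces that your boundary term needs. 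A second, smaller point: $\eps z_n$ is divergence-free in $\Omega$ but need not be a curl when $\Omega$ has cavities, since its flux through an interior boundary component can be nonzero (unlike $\curl u_j$, the field the paper potentializes); your parenthetical about harmonic fields covers this finite-dimensional correction, but it must be made explicit because it is where the information $\div(\eps z_n)=0$ actually enters.

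Both defects are repaired by the device the paper uses in its Steps 1--2: extend the divergence-free field to a ball $B_R$ containing $\overline\Omega$ in its interior by solving a Neumann problem in $B_R\setminus\overline\Omega$, construct a divergence-free potential on the simply connected ball, and obtain the $H^1(\Omega)$-bound from \emph{interior} elliptic regularity ($\Delta A_n=-\curl(\eps z_n)$ in $B_R$ plus Caccioppoli), imposing no boundary condition on $A_n$ at all. Since your pairing argument never uses $\nu\cdot A_n|_\Gamma=0$, abandoning that gauge costs nothing, and with this substitution your proof closes and constitutes a valid alternative to the published one.
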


The second embedding is used when the Maxwell system is formulated in
$H$; for this reason, the coefficient $\mu$ (instead of $\eps$) is
typically used in this formulation. In order to highlight the
controlled quantities, one may write
$H(\curl, \div \eps, \Omega, \nn{})$ for the space on the left-hand
side of \eqref{eq:compact-claim1} and
$H(\curl, \div \mu, \Omega, \nu \cdot |_{\Gamma})$ for the space on
the left-hand side of \eqref{eq:compact-claim2}.

We note that the compactness has some relations with the div--curl
lemma, sometimes called compensated compactness. Knowledge on the curl
and on the divergence of a function somehow controls all
derivatives. Loosely speaking, this property is already suggested by
the relation $\Delta = -\curl\curl + \nabla \div$. We will actually
exploit this relation in our proof.

We sketch a proof for the first embedding, which is used in this
article. The proof for the second embedding is identical in most
steps, the only difference is that a Neumann problem instead of a
Dirichlet problem must be analyzed in Step 3. We note that the proof
for a vanishing normal component with the same methods can be found in
\cite{Existence-H-field}.

\begin{proof}
  In Steps 1--3 of this proof, we assume that $\Omega$ is simply
  connected and that the coefficient is $\eps\equiv 1$.  In Step 4 we
  treat general coefficients $\eps$ and in Step 5 we show how the
  assumption of simple connectedness is removed.
  
  In order to show compactness, we consider a bounded sequence $u_j$
  in the space $H(\curl, \Omega, \Gamma) \cap H(\div \eps,\Omega)$.
  We recall that this implies that $u_j$, $f_j \coloneqq \curl u_j$,
  $g_j \coloneqq \div (\eps u_j)$ and the tangential boundary values of $u_j$
  are bounded sequences in $L^2$-spaces.

  The set $\Omega$ is bounded, we can therefore choose a radius $R>0$
  such that $\Omega$ is compactly contained in the open ball with this
  radius,
  $\overline{\Omega} \subset B_R \coloneqq B_R(0) \subset \R^3$.
  
  \smallskip {\em Step 1: Extension of $f_j$ with a gradient.}  We
  consider the solutions $\psi_j$ of the following Laplace problem:
  $\Delta\psi_j = 0$ in $B_R\setminus \overline{\Omega}$ with
  $\psi_j = 0$ on $\del B_R$ and $\del_\nu\psi_j = \nu\cdot f_j$ on
  $\del\Omega$. We define an extension of $f_j$ by setting
  \begin{equation*}
    \tilde{f_j} \coloneqq
    \begin{cases}
      f_j\qquad &\text{in } \Omega\,,\\
      \nabla\psi_j\qquad &\text{in } B_R\setminus \Omega\,.
    \end{cases}
  \end{equation*}
  The construction ensures that this extended function has a vanishing
  divergence. We note that the function $f_j$ is a curl and has
  therefore a vanishing divergence; this fact actually allows us to
  formulate the boundary condition in the Neumann problem even though
  $f_j$ is only an $L^2$-function. A detailed verification of this
  fact can be found in \cite{Existence-H-field}.
  
  \smallskip {\em Step 2: Vector potential $\tilde v_j$ for
    $\tilde{f_j}$.} As a function with vanishing divergence on the
  simply connected set $B_R$, the function $\tilde{f_j}$ possesses a
  divergence-free vector potential $\tilde v_j\in L^2(B_R, \C^3)$ with
  $\curl \tilde v_j = \tilde{f_j}$ in $B_R$. One way to show this
  standard result is the following: In a first step, a function
  $\tilde w_j$ with $\curl \tilde w_j = \tilde{f_j}$ is constructed
  with the help of path integrals of $\tilde{f_j}$. When an average
  over a set of starting points of the path integrals is taken, the
  function $\tilde w_j$ is of class $L^2(B_R, \C^3)$ with norm bounded
  by the norm of $\tilde f_j$. In a second step, we subtract the
  gradient $\nabla\tilde\xi_j$ of a function $\tilde\xi_j$ with
  $\Delta\tilde\xi_j = \div(\tilde w_j)$. We obtain that
  $\tilde v_j = \tilde w_j - \nabla\tilde\xi_j$ has the same rotation
  as $\tilde w_j$ (namely $\tilde{f_j}$) and satisfies
  $\div \tilde v_j = 0$.

  Let us discuss the regularity of $\tilde v_j$. Because of
  $\Delta = -\curl\curl + \nabla \div$, the function
  $\tilde v_j \in L^2(B_R, \C^3)$ satisfies, in the sense of
  distributions,
  $\Delta \tilde v_j = -\curl \tilde f_j \in H^{-1}(B_R)$. The
  sequence $\tilde v_j$ is therefore locally (in $B_R$) of class $H^1$
  (Caccioppoli's inequality).  This implies that
  $v_j \coloneqq \tilde v_j|_\Omega$ is bounded in $H^1(\Omega)$. In
  particular, it possesses a subsequence that converges strongly in
  $L^2(\Omega, \C^3)$. We consider only this subsequence in the
  following.
  
  \smallskip {\em Step 3: Scalar potential for $u_j - v_j$.}  We
  consider now only functions on the simply connected set $\Omega$.
  By construction, the difference $u_j - v_j$ has a vanishing curl, it
  can therefore be written, for some scalar function
  $\phi_j \in H^1(\Omega)$, as a gradient: $u_j - v_j = \nabla\phi_j$.

  Let us analyze the regularity properties of $\phi_j$ along $\Gamma$.
  We have bounds for $\nn{u_j} \in L^2(\partial \Omega)$ by the
  assumptions on $u_j$ and we have bounds for
  $\nn{v_j} \in L^2(\partial \Omega)$ because of the boundedness of
  $v_j\in H^1(\Omega)$ and classical trace theorems. We therefore have
  $\nn{\nabla \phi_j} = \nn{u_j} - \nn{v_j} \in L^2(\partial \Omega)$
  bounded and conclude the boundedness of
  $\phi_j \in H^1(\partial \Omega)$.

  It remains to analyze the Dirichlet problem
  \begin{equation*}
    \Delta\phi = g\quad \text{ in } \Omega\,,\qquad
    \phi = h\quad \text{ on } \Gamma\,,
  \end{equation*}
  for given $g$ and $h$.  By classical elliptic theory, the solution
  operator to this problem is a bounded linear operator
  $\cT \colon H^{-1}(\Omega) \times H^{1/2}(\Gamma) \ni (g, h) \mapsto
  \phi \in H^1(\Omega)$.  We have the compact embeddings
  $L^2(\Omega) \cptemb H^{-1}(\Omega)$ and
  $H^1(\Gamma)\cptemb H^{1/2}(\Gamma)$. They imply that the operator
  $\cT$ on these better function spaces is compact,
  \begin{equation*}
    \tilde{\cT} \colon L^2(\Omega) \times H^{1}(\Gamma) \ni (g, h) \mapsto
    \phi \in H^1(\Omega)\quad \text{compact}\,.
  \end{equation*}
  As a solution of the Dirichlet problem,
  $\phi_j = \tilde{\cT}(g_j, \phi_j|_\Gamma)$, the sequence $\phi_j$
  has a subsequence that converges in $H^1(\Omega)$.  Along this
  subsequence, both $\nabla\phi_j$ and $v_j$ are converging strongly
  in $L^2(\Omega, \C^3)$, hence also $u_j = v_j + \nabla\phi_j$ is
  converging in this space.  This concludes the compactness proof for
  simply connected domains and $\eps\equiv 1$.

  \smallskip {\em Step 4: General coefficients $\eps$.}  We now
  consider a general coercive coefficient
  $\eps\in L^\infty(\Omega, \C^{3 \times 3})$. In order to show
  compactness, we consider once more a bounded sequence
  $u_j\in H(\curl, \Omega, \Gamma) \cap H(\div \eps,\Omega)$.
  
  We claim that it is sufficient to consider the case
  $\div(\eps u_j) = 0$. Indeed, in the general case, we consider the
  sequence $\hat u_j \coloneqq u_j - \nabla\xi_j$ where
  $\xi_j\in H^1_0(\Omega)$ solves
  $\div(\eps\nabla\xi_j) = \div(\eps u_j)$. For a subsequence,
  $\div(\eps u_j)$ is converging weakly in $L^2(\Omega)$ and hence
  strongly in $H^{-1}(\Omega)$, which implies that $\xi_j$ is strongly
  converging in $H^1(\Omega)$ and $\nabla\xi_j$ strongly in
  $L^2(\Omega)$. It is therefore sufficient to prove the strong
  convergence of $\hat u_j$ in $L^2(\Omega)$. This justifies the
  claim.
  
  We use the Helmholtz decomposition
  $L^2(\Omega,\C^3) = D_\eps \oplus_\eps G$ with $D_\eps$ and $G$
  defined in \eqref{eq:space-Deps} and \eqref{eq:space-G}.  We
  discussed this decomposition in Lemma \ref{lem:Helmholtz-dec},
  where we also noted that
  $\tilde D_\eps \coloneqq H(\curl,\Omega, \Gamma)\cap D_\eps$ allows us to
  introduce the decomposition
  $H(\curl,\Omega, \Gamma) = \tilde D_\eps \oplus G$.  The corresponding
  projections are bounded.

  We decompose $u_j$ with respect to the decomposition that
  corresponds to $\eps = \id$, that is, using
  $H(\curl,\Omega, \Gamma) = \tilde D_\id\oplus G$:
  \begin{equation*}
    u_j\ =\ v_j\ +\ \nabla \psi_j\quad\text{with $v_j\in \tilde D_\id$ and }
    \psi_j\in H^1_0(\Omega,\C)\,.
  \end{equation*}
  Since $u_j$ is bounded in $H(\curl,\Omega, \Gamma)$ and since projections
  are bounded, the sequence $v_j\in \tilde D_\id$ is bounded in
  $H(\curl,\Omega, \Gamma)$. Since it has a vanishing divergence, Steps 1--3
  yield that there exists a subsequence, again denoted by $v_j$, which
  converges in $L^2(\Omega,\C^3)$. With this knowledge, we now read
  the previous decomposition in the form
  \begin{equation*}
    v_j\ =\ u_j\ -\ \nabla \psi_j\,,
  \end{equation*}
  and note that this is a decomposition of $v_j$ in
  $L^2(\Omega,\C^3) = D_\eps\ \oplus\ G$. Since $v_j$ converges in
  $L^2(\Omega,\C^3)$ and since the projection onto $D_\eps$ is bounded
  in the space $L^2(\Omega,\C^3)$, we conclude that $u_j$ converges in
  $L^2(\Omega,\C^3)$.  This concludes the proof.

  \smallskip {\em Step 5: Removing the assumption of simple
    connectedness.} It remains to consider an arbitrary bounded
  Lipschitz domain $\Omega$. Let again $u_j$ be a bounded sequence in
  the spaces on the left-hand side. We choose a finite family of
  simply connected Lipschitz subdomains $\Omega_k\subset \Omega$,
  $k=1,...,K$, such that $\Omega$ is covered,
  $\Omega\subset \bigcup_{k=1}^K \Omega_k$. We choose a subordinate
  family of smooth cut-off functions $\eta_k$ with
  $\bigcup_{k=1}^K \eta_k = 1$ on $\Omega$. Steps 1--4 can be applied
  successively in the subdomains $\Omega_k$ to the sequences
  $u_j \eta_k$ to find $L^2(\Omega)$-convergent subsequences. For the
  corresponding subsequence, also $u_j = \sum_{k=1}^K u_j \eta_k$ is
  then $L^2(\Omega)$-convergent.
\end{proof}

\section{Limiting absorption principle}\label{sec:LimitingAbs}

In this section, we prove Corollary \ref{cor:existence} with a
limiting absorption principle. In the limiting absorption principle,
one introduces a damping term in the equation and considers the limit
of a vanishing damping (or absorption). Here, we use a small real
number $\delta>0$ and replace the pre-factor $\omega^2 \eps$ in
equation \eqref{eq:WeakForm:Maxwell-in-E} by
$\omega^2 \eps + i \delta$. We will verify that this equation has a
solution $E_\delta$, we will find a limit
$E = \lim_{\delta\to 0} E_\delta$, and we show that $E$ is a solution
to \eqref{eq:WeakForm:Maxwell-in-E}.

The problem with absorption is: Find
$E_\delta \in H_\Theta(\curl, \Omega, \Gamma)$ such that
\begin{eqnarray}
  \nonumber
  \int\limits_\Omega \left\{\mu^{-1} \curl E_\delta \cdot \curl \phi
  -(\omega^2 \eps + i\delta) E_\delta \cdot \phi\right\}
  - i \omega \int\limits_{\Gamma} \Sigma(\nn{E_\delta}) \cdot \nn{\phi}
  \\
  = \int\limits_\Omega \left\{i \omega f_e \cdot \phi
  + \mu^{-1} f_h \cdot \curl \phi \right\}
  \quad \forall\ \phi\in H_\Theta(\curl, \Omega, \Gamma)\,.
  \label{eq:WeakForm:Maxwell-in-E-delta}
\end{eqnarray}

\begin{proof}[Proof of Corollary \ref{cor:existence} with limiting
  absorption]
  Lemma \ref{lem:div-free-data} guarantees that we can assume, without
  loss of generality, that the data satisfy $\div(f_e) = 0$.
  Lemma~\ref{lem:Existence-delta} below provides a unique solution
  $E_\delta$ of \eqref{eq:WeakForm:Maxwell-in-E-delta}.
  Lemma~\ref{lem:Boundedness-delta} below shows that the sequence
  $E_\delta$ is bounded in $H_\Theta(\curl, \Omega, \Gamma)$ (in the
  setting of Corollary \ref{cor:existence}, where it is assumed that
  \eqref{eq:WeakForm:Maxwell-in-E} has only the trivial solution for
  $f_h = f_e = 0$). Since $H_\Theta(\curl, \Omega, \Gamma)$ is
  reflexive, there exists a subsequence of $E_\delta$ that converges
  weakly to some limit $E \in H_\Theta(\curl, \Omega, \Gamma)$. The
  weak convergence allows us to take the limit $\delta \to 0$ in
  \eqref{eq:WeakForm:Maxwell-in-E-delta}. We obtain that $E$ solves
  \eqref{eq:WeakForm:Maxwell-in-E}.
\end{proof}

\begin{lemma}[Existence of a solution for the problem with
  absorption]\label{lem:Existence-delta}
  Let $\Omega\subset\R^3$ be a bounded Lipschitz domain and $\omega>0$
  and $\eps, \mu$ and $\Sigma, \Theta$ satisfy
  Assumption~\ref{ass:Coefficents}. Let
  $f_e, f_h \in L^2(\Omega, \C^3)$ be right-hand sides with
  $\div(f_e) = 0$.  Then, there exists $\delta_0>0$ such that, for
  every $\delta \in (0,\delta_0)$, equation
  \eqref{eq:WeakForm:Maxwell-in-E-delta} has a unique weak solution
  $E_\delta \in H_\Theta(\curl, \Omega, \Gamma)$.
\end{lemma}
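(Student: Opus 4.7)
My plan is to apply the Lax--Milgram theorem directly on the Hilbert space $X \coloneqq H_\Theta(\curl, \Omega, \Gamma)$. I interpret the left-hand side of \eqref{eq:WeakForm:Maxwell-in-E-delta} (with the standard complex conjugate on the test function) as the continuous sesquilinear form
\begin{equation*}
  a_\delta(u, \phi) \coloneqq \int_\Omega \bigl\{\mu^{-1} \curl u \cdot \curl \bar\phi - (\omega^2 \eps + i\delta)\, u \cdot \bar\phi\bigr\} - i \omega \int_\Gamma \Sigma(\nn{u}) \cdot \nn{\bar\phi}
\end{equation*}
on $X \times X$, and the right-hand side as a continuous antilinear $L \colon X \to \C$. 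The difficulty is that $\Re a_\delta(u,u)$ contains the sign-indefinite contribution $-\omega^2 \int_\Omega \eps\, u \cdot \bar u$, so $a_\delta$ is not coercive in the standard sense; the purpose of both the $i\delta$ absorption and of the rotation trick described below is precisely to cure this obstruction.

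Testing $\phi = u$, I will use that $\mu^{-1}$ and $\eps$ act as Hermitian positive definite matrices (implicit in the coercivity required in Assumption \ref{ass:Coefficents}) and that for $u \in X$ one has $\Theta(\nn{u}) = 0$, so that the boundary bound \eqref{eq:EstBoundary} gives $\int_\Gamma \Sigma(\nn{u}) \cdot \overline{\nn{u}} \ge c_0 \|\nn{u}\|_{L^2(\Gamma)}^2$ and that quantity is real. Separating real and imaginary parts, I expect the two estimates
\begin{align*}
  \Re a_\delta(u, u) &\ge c_0 \|\curl u\|_{L^2(\Omega)}^2 - \omega^2 \|\eps\|_{L^\infty(\Omega)} \|u\|_{L^2(\Omega)}^2,\\
  -\Im a_\delta(u, u) &\ge \delta \|u\|_{L^2(\Omega)}^2 + \omega c_0 \|\nn{u}\|_{L^2(\Gamma)}^2.
\end{align*}
The key observation (and the step I expect to be the heart of the argument) is that the absorption contributes a coercive $\delta \|u\|^2$ term in $-\Im a_\delta(u,u)$, which is exactly the quantity needed to compensate the indefinite $-\omega^2 \|\eps\|_{L^\infty}\|u\|^2$ appearing in $\Re a_\delta(u,u)$.

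I will then consider the rotated sesquilinear form $\tilde a_\delta \coloneqq (\alpha + i)\, a_\delta$ for a small parameter $\alpha = \alpha(\delta) > 0$ to be chosen. Its real part equals $\alpha \Re a_\delta - \Im a_\delta$, and taking $\alpha$ strictly below $\delta / (\omega^2 \|\eps\|_{L^\infty})$ (for instance $\alpha \coloneqq \delta / (2 \omega^2 \|\eps\|_{L^\infty} + 1)$) renders $\tilde a_\delta$ a continuous coercive sesquilinear form on $X$ with some constant $c(\delta) > 0$. Since $(\alpha + i) L$ is continuous antilinear, the Lax--Milgram theorem then produces a unique $E_\delta \in X$ with $\tilde a_\delta(E_\delta, \phi) = (\alpha + i) L(\phi)$ for all $\phi \in X$; dividing by $\alpha + i \ne 0$ yields the asserted unique weak solution of \eqref{eq:WeakForm:Maxwell-in-E-delta}. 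The hypothesis $\div f_e = 0$ plays no role in this step (it will be used later, in the passage $\delta \to 0$), and the argument in fact delivers existence and uniqueness for every $\delta > 0$, so any positive $\delta_0$ is admissible.
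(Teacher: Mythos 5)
Your proof is correct and takes essentially the same approach as the paper: the same sesquilinear form $a_\delta$ on $H_\Theta(\curl,\Omega,\Gamma)$, the same two lower bounds for $\Re a_\delta(u,u)$ and $-\Im a_\delta(u,u)$ (using \eqref{eq:EstBoundary} for the boundary term), and the Lemma of Lax--Milgram. The only difference is cosmetic: the paper combines the two bounds via $|a_\delta(u,u)|\ge (1-\delta^2)|\Im a_\delta(u,u)|+\delta^2\,\Re a_\delta(u,u)$, which is why it restricts to $\delta<\delta_0$, whereas your rotation by $\alpha+i$ yields real-part coercivity directly and, as you correctly note, gives the result for every $\delta>0$.
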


\begin{proof}
  We define a sesquilinear form $a_\delta$ on
  $H_\Theta(\curl, \Omega, \Gamma)$ by setting, for
  $u, \varphi \in H_\Theta(\curl, \Omega, \Gamma)$,
  \begin{equation}
    \label {eq:a-delta-lim-abs}
    a_\delta (u, \varphi) \coloneqq
    \int\limits_\Omega \left\{\mu^{-1}\curl u \cdot \curl \bar{\varphi}
      - (\omega^2 \eps +i \delta) u \cdot \bar{\varphi} \right\}
    - i \omega \int\limits_{\Gamma} \Sigma(\nn{u}) \cdot \nn{\bar{\varphi}} \,.
  \end{equation}
  The form $a_\delta$ allows us to rewrite
  \eqref{eq:WeakForm:Maxwell-in-E-delta} as
  \begin{equation}\label{eq:Maxwell-delta-absorption}
    a_\delta(E_\delta, \phi)
    = \int\limits_\Omega \left\{i \omega f_e \cdot \bar{\phi}
      + \mu^{-1} f_h \cdot \curl \bar{\phi}\right\}
    \qquad \forall \, \phi \in H_\Theta(\curl, \Omega, \Gamma)\,.
  \end{equation}
  We calculate with the coercivity lower bound \eqref{eq:EstBoundary},
  for arbitrary $u \in H_\Theta(\curl, \Omega, \Gamma)$,
  \begin{align*}
    &|\Im a_\delta(u,u)| \geq \delta \| u \|_{L^2(\Omega)}^2
      + c_0 \omega \| \nn{u} \|_{L^2(\Gamma)}^2\,,\\
    &\Re a_\delta(u,u) \geq c_0
      \| \curl u \|_{L^2(\Omega)}^2
      - \omega^2 {
      \|\eps\|_{L^\infty(\Omega)}}\|u \|_{L^2(\Omega)}^2 \,.
  \end{align*}
  
  We observe the following fact in $\R^2 \equiv \C$: For every vector
  $z = (z_1,z_2) \in \R^2$ and every $s \in [0,1]$, there holds
  $|z| \ge \max\{ |z_1|, |z_2|\} \ge (1-s) |z_1| + s |z_2|$. This
  inequality allows us to calculate, with $s = \delta^2$,
  \begin{align*}
    &|a_\delta(u,u)|
      \geq (1- \delta^2)|\Im a_\delta(u,u)| + \delta^2 \Re a_\delta (u,u) \\
    &\quad \geq (1-\delta^2)\left( \delta \| u \|_{L^2(\Omega)}^2
      + c_0\omega\, \| \nn{u} \|_{L^2(\Gamma)}^2\right)
    \\
    &\qquad+ \delta^2 \left(c_0 \| \curl u \|_{L^2(\Omega)}^2
      - \omega^2 \|\eps\|_{L^\infty(\Omega)} \| u \|_{L^2(\Omega)}^2 \right)\,.
  \end{align*}
  Choosing $\delta_0>0$ small, we achieve
  $(1-\delta^2) \delta \ge 2 \delta^2\omega^2
  {\|\eps\|_{L^\infty(\Omega)}}$ for all $0 < \delta < \delta_0$; for
  these values of $\delta$, the form $a_\delta$ is coercive. Problem
  \eqref{eq:Maxwell-delta-absorption} for $E_\delta$ can therefore be
  solved with the Lemma of Lax--Milgram.
\end{proof}

\begin{lemma}[Boundedness of solutions to the problem with
  absorption]\label{lem:Boundedness-delta}
  Let the assumptions of Lemma \ref{lem:Existence-delta} be
  satisfied. For a sequence $\delta\to 0$, let
  $E_\delta \in H_\Theta(\curl,\Omega, \Gamma)$ be the corresponding
  sequence of solutions of \eqref{eq:WeakForm:Maxwell-in-E-delta}. We
  assume that relation \eqref{eq:WeakForm:Maxwell-in-E} with data
  $f_h = f_e = 0$ has only the trivial solution $E = 0$. Then, the
  sequence $E_\delta$ is bounded in $H_\Theta(\curl,\Omega,\Gamma)$.
\end{lemma}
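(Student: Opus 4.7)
The plan is a standard limiting absorption contradiction argument. Assume for contradiction that along a subsequence $\delta_n\to 0$ the norms $\alpha_n := \|E_{\delta_n}\|_{H_\Theta(\curl,\Omega,\Gamma)}$ diverge. Normalize $\tilde E_n := E_{\delta_n}/\alpha_n$, so that $\|\tilde E_n\|_{H_\Theta(\curl,\Omega,\Gamma)} = 1$ and $\tilde E_n$ solves \eqref{eq:WeakForm:Maxwell-in-E-delta} with right-hand sides $f_e/\alpha_n$, $f_h/\alpha_n\to 0$. By reflexivity extract $\tilde E_n \rightharpoonup \tilde E$ in $H_\Theta(\curl,\Omega,\Gamma)$; in particular $\curl \tilde E_n \rightharpoonup \curl \tilde E$ in $L^2(\Omega,\C^3)$ and $\nn{\tilde E_n} \rightharpoonup \nn{\tilde E}$ in $L^2(\Gamma,\C^3)$. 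Passing to the limit term by term in the equation (the $i\delta_n$-term vanishes since $\delta_n \to 0$ and $\tilde E_n$ is $L^2$-bounded), I obtain that $\tilde E$ satisfies \eqref{eq:WeakForm:Maxwell-in-E} with $f_e = f_h = 0$, so $\tilde E = 0$ by the assumption of the lemma.

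The main obstacle is to upgrade weak to strong convergence of $\tilde E_n$ in $H_\Theta(\curl,\Omega,\Gamma)$. For the $L^2(\Omega)$ component, I want to invoke Lemma \ref{lem:MaxwellCompactness}, but the natural divergence condition for $\tilde E_n$ is with respect to the perturbed coefficient $\omega^2\eps + i\delta_n$: testing \eqref{eq:WeakForm:Maxwell-in-E-delta} with $\phi = \nabla\psi$ for $\psi \in H^1_0(\Omega)$ and using $\div f_e = 0$ gives $\div((\omega^2\eps + i\delta_n)\tilde E_n) = 0$ distributionally. To reduce to the unperturbed coefficient, I apply the Helmholtz decomposition of Lemma \ref{lem:Helmholtz-dec} and write $\tilde E_n = v_n + \nabla\psi_n$ with $v_n \in Y_\eps$ and $\psi_n \in H^1_0(\Omega)$. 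Substituting into the divergence identity and using $\div(\eps v_n) = 0$ yields the elliptic equation $\div((\omega^2\eps + i\delta_n)\nabla\psi_n) = -i\delta_n\,\div v_n$; testing with $\psi_n$ itself and using that the sesquilinear form $(\psi,\phi)\mapsto \int (\omega^2\eps + i\delta_n)\nabla\psi\cdot\nabla\bar\phi$ is uniformly coercive in $n$ (its real part is bounded below by $c_0\omega^2\|\nabla\psi\|_{L^2}^2$) provides $\|\nabla\psi_n\|_{L^2} \lesssim \delta_n\|v_n\|_{L^2} \to 0$. Since $v_n$ is bounded in $H_\Theta(\curl,\Omega,\Gamma)\cap H(\div\eps,\Omega)$ with $\div(\eps v_n) = 0$, Lemma \ref{lem:MaxwellCompactness} provides a strongly $L^2$-converging subsequence, and together with $\nabla\psi_n\to 0$ I conclude $\tilde E_n \to \tilde E = 0$ strongly in $L^2(\Omega,\C^3)$.

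For the remaining two components of the norm, I test \eqref{eq:WeakForm:Maxwell-in-E-delta} against $\phi = \tilde E_n$; the right-hand side is of order $1/\alpha_n\to 0$, and the coercivity assumption makes $\int_\Omega \eps\tilde E_n\cdot \bar{\tilde E_n}$ and $\int_\Omega \mu^{-1}\curl\tilde E_n\cdot\curl\bar{\tilde E_n}$ real-valued. Separating real and imaginary parts, the imaginary part gives $\delta_n\|\tilde E_n\|_{L^2}^2 + \omega\int_\Gamma \Sigma(\nn{\tilde E_n})\cdot \nn{\bar{\tilde E_n}} \to 0$; since $\Theta(\nn{\tilde E_n}) = 0$, the coercivity \eqref{eq:Coerc:Sigma+Theta} forces $\|\nn{\tilde E_n}\|_{L^2(\Gamma)} \to 0$. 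The real part reads $\int_\Omega \mu^{-1}\curl\tilde E_n\cdot\curl\bar{\tilde E_n} = \omega^2\int_\Omega \eps\tilde E_n\cdot\bar{\tilde E_n} + o(1)$, and combined with the strong $L^2$-convergence $\tilde E_n \to 0$ this forces $\|\curl\tilde E_n\|_{L^2} \to 0$. Altogether $\|\tilde E_n\|_{H_\Theta(\curl,\Omega,\Gamma)} \to 0$, contradicting $\|\tilde E_n\| = 1$. The delicate point is the second paragraph: the divergence-free structure of $\tilde E_n$ carries the parameter $\delta_n$, and the Helmholtz decomposition with respect to the fixed $\eps$ is precisely what reroutes the analysis to a setting in which Lemma \ref{lem:MaxwellCompactness} applies verbatim.
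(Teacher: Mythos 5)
Your proposal is correct and follows essentially the same route as the paper: a contradiction argument with normalization, identification of the weak limit as the trivial solution of the homogeneous problem, the Helmholtz decomposition $\tilde E = v + \nabla\psi$ with $v\in Y_\eps$, the Maxwell compactness theorem for the $Y_\eps$-part, and an $O(\delta)$ estimate for the gradient part obtained by testing with gradients. The only difference is that you normalize by the full $H_\Theta(\curl,\Omega,\Gamma)$-norm rather than by the $L^2(\Omega)$-norm, so the energy identity obtained by testing with $\tilde E_n$ is deployed at the end (to make the curl and trace contributions vanish) instead of at the beginning (to show that the $L^2$-norm itself diverges); both variants are valid.
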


\begin{proof}
  {\em Step 1: Preparation.} For a contradiction argument, we assume
  that there exists a subsequence $E_\delta$ such that
  $\|E_\delta\|_{H_\theta(\curl, \Omega, \Gamma)} \to \infty$.  The
  subsequent calculation uses that $c_0$ is a coercivity constant for
  the matrix function $\mu^{-1}$ and the fact that
  $|z_1 + i z_2| = \sqrt{|z_1|^2 + |z_2|^2} \ge \frac12 (|z_1| +
  |z_2|)$ holds for real numbers $z_1$ and $z_2$.  Using $E_\delta$ as
  a test-function in \eqref{eq:WeakForm:Maxwell-in-E-delta}, we find
  \begin{align*}
    &\frac{1}{2}\left(c_0 \|\curl E_\delta\|_{L^2(\Omega)}^2
      + \omega c_0 \|\nn{E_\delta}\|_{L^2(\Gamma)}^2\right)
    \displaybreak[2]\\
    &\qquad \leq
      \left|\int\limits_\Omega \mu^{-1}\curl E_\delta \cdot \curl \bar{E}_\delta 
      - i \omega \int\limits_{\Gamma} \Sigma(\nn{E_\delta}) \cdot \nn{\bar{E}_\delta}
      \right|
    \displaybreak[2]\\
    &\qquad =
      \left|\int\limits_\Omega (\omega^2 \eps + i \delta) E_\delta \cdot \bar{E}_\delta
      +
      \int\limits_\Omega \left\{ i \omega f_e \cdot \bar{E}_\delta
      + \mu^{-1} f_h \cdot  \curl \bar{E}_\delta \right\}\right|\,.
  \end{align*}
  For an arbitrarily small number $\lambda>0$, we can continue this
  calculation with Young's inequality to find
  \begin{eqnarray*}
    \frac{1}{2}\left(c_0 \|\curl E_\delta\|_{L^2(\Omega)}^2
    + \omega c_0 \|\nn{E_\delta}\|_{L^2(\Gamma)}^2\right)
    \\
    \leq C \|E_\delta\|_{L^2(\Omega)}^2 + C_\lambda + \lambda \|\curl E_\delta\|_{L^2(\Omega)}^2
    \,,
  \end{eqnarray*}
  for some $C$ depending on $\omega$, $f_e$ and $\eps$, and
  $C_\lambda$ depending on $\mu$, $f_h$ and $\lambda$. Choosing
  $\lambda = c_0/4$ and subtracting the term
  $\lambda \|\curl E_\delta\|_{L^2(\Omega)}^2$ on both sides, we find,
  for some constant $C$, the inequality
  $\|\curl E_\delta\|_{L^2(\Omega)}^2 +
  \|\nn{E_\delta}\|_{L^2(\Gamma)}^2 \le C\, (1 +
  \|E_\delta\|_{L^2(\Omega)}^2)$.
	
  In particular, we can conclude that our assumption
  $\|E_\delta\|_{H_\Theta(\curl, \Omega, \Gamma)} \to \infty$ implies
  the divergence of the $L^2$-norm,
  $\|E_\delta\|_{L^2(\Omega)} \to \infty$.

  \smallskip {\em Step 2: Normalization.} We normalize the sequence
  $E_\delta$ and consider the new sequence
  $\tilde{E}_\delta \coloneqq E_\delta / \|E_\delta\|_{L^2(\Omega,
    \C^3)}$. The normalized sequence satisfies
  $\|\tilde{E}_\delta\|_{L^2(\Omega, \C^3)} = 1$ and, by Step 1, that
  $\|\curl \tilde E_\delta\|_{L^2(\Omega)}^2 +
  \|\nn{E_\delta}\|_{L^2(\Gamma)}^2$ is bounded.  Since
  $H_\Theta(\curl, \Omega, \Gamma)$ is reflexive, there exists a
  subsequence of $\tilde{E}_\delta$ that converges weakly to some
  limit $\tilde{E} \in H_\Theta(\curl, \Omega, \Gamma)$. In
  particular, we have: $\tilde{E}_\delta\weakto \tilde{E}$ in
  $L^2(\Omega)$, $\curl \tilde{E}_\delta\weakto \curl \tilde{E}$ in
  $L^2(\Omega)$ and $\nn{\tilde{E}_\delta}\weakto \nn{\tilde{E}}$ in
  $L^2(\Gamma)$.

  The function $\tilde{E}_\delta$ solves
  \eqref{eq:WeakForm:Maxwell-in-E-delta} for source terms
  $\tilde f_e = f_e / \|E_\delta\|_{L^2(\Omega)}$ and
  $\tilde f_h = f_h / \|E_\delta\|_{L^2(\Omega)}$.  The weak
  convergence of $\tilde{E}_\delta$ allows us to perform the limit
  $\delta \to 0$ in this relation. We obtain that the limit
  $\tilde{E}$ solves \eqref{eq:WeakForm:Maxwell-in-E} for
  $f_h= f_e =0$. Our assumption was that there is no non-trivial
  solution to the homogeneous problem; this implies $\tilde{E} = 0$.

  \smallskip {\em Step 3: Strong convergence.} In this step we show
  the strong convergence of $\tilde{E}_\delta$ in $L^2(\Omega, \C^3)$
  along a subsequence. Once this is obtained, we have the desired
  contradiction: $\|\tilde E_\delta\|_{L^2(\Omega)} = 1$ is in
  conflict with the strong convergence
  $\tilde{E}_\delta \to \tilde{E} = 0$.
	
  \smallskip In order to show the strong convergence, we decompose
  $\tilde{E}_\delta$ according to the Helmholtz decomposition of Lemma
  \ref{lem:Helmholtz-dec}:
  $\tilde{E}_\delta = \tilde{E}_\delta^Y + \nabla \psi_\delta$ for
  $\tilde{E}_\delta^Y \in Y_\eps$ and $\psi_\delta \in H^1_0(\Omega)$.
  Boundedness of $\tilde{E}_\delta$ in
  $H_\Theta(\curl, \Omega, \Gamma)$ and boundedness of the projection
  implies that $\tilde{E}_\delta^Y$ is bounded in $Y_\eps$.  The
  compactness of $Y_\eps$ (shown in Lemma
  \ref{lem:MaxwellCompactness}) allows us to pass to a subsequence
  such that $\tilde{E}_\delta^Y$ converges strongly in
  $L^2(\Omega)$. We therefore have the desired result once we have the
  strong convergence of $\nabla\psi_\delta$ in $L^2(\Omega)$.
	
  \smallskip We use the test-function
  $\phi = \nabla\bar{\psi}_\delta$ in \eqref{eq:WeakForm:Maxwell-in-E-delta}.  Since the curl of a gradient
  vanishes and since we assumed that $f_e$ is orthogonal to gradients
  of $H^1_0(\Omega)$-function, we obtain
  \begin{align*}
    \int_\Omega (\eps + i\omega^{-2} \delta) \tilde{E}_\delta \cdot
    \nabla \bar{\psi}_\delta
    = 0\,.
  \end{align*}
  Inserting the decomposition
  $\tilde{E}_\delta = \tilde{E}_\delta^Y + \nabla \psi_\delta$ in the
  integral containing $\eps$, we find
  \begin{align}\label{eq:div-free:H-delta}
    i \int_\Omega \omega^{-2} \delta \tilde{E}_\delta \cdot
    \nabla \bar{\psi}_\delta
    + \int_\Omega \eps \nabla \psi_\delta\cdot\nabla \bar{\psi}_\delta
    = -\int_\Omega \eps \tilde{E}_\delta^Y \cdot
    \nabla \bar{\psi}_\delta
    = 0\,,
  \end{align}
  where we used the property $\tilde{E}_\delta^Y\in {Y_\eps}$
  in the last equality. The coercivity of $\eps$ allows us to deduce
  from \eqref{eq:div-free:H-delta}
  \begin{equation*}
    c_0 \|\nabla \psi_\delta\|_{L^2(\Omega)}^2
    \leq \int_\Omega \eps \nabla \psi_\delta \cdot\nabla \bar{\psi}_\delta
    = -i \int_\Omega \omega^{-2} \delta \tilde{E}_\delta \cdot
    \nabla \bar{\psi}_\delta\,.
  \end{equation*}
  With the Cauchy--Schwarz inequality and the normalization
  $\|\tilde{E}_\delta\|_{L^2(\Omega)} = 1$ we obtain
  $\|\nabla \psi_\delta\|_{L^2(\Omega)} \le C \delta \to 0$ as
  $\delta \to 0$.  This is the desired strong convergence of
  $\nabla \psi_\delta$.
	
  The strong convergence of $\tilde{E}_\delta^Y$ together with the
  strong convergence of $\nabla \psi_\delta$ implies the strong
  convergence of
  $\tilde{E}_\delta = \tilde{E}_\delta^Y + \nabla \psi_\delta$ in
  $L^2(\Omega)$. This provides the desired contradiction and concludes
  the proof.
\end{proof}

\bibliographystyle{plain}

\end{document}